\newcommand{\bl}[1]{\textcolor{blue}{#1}}
\definecolor{mypurple}{rgb}{.4,.0,.5}
\def\x{{\bf x}}
\def\x{{\mathbf x}}
\def\x{{\bf x}}
\def\be{\begin{equation}}
\def\ee{\end{equation}}
\def\ba{\left[\begin{array}}
\def\ea{\end{array}\right]}
\def\x{{\bf x}}
\def\1{{\bf 1}}
\def\g{{\bf g}}
\def\0{{\bf 0}}
\def\erfc{\mbox{erfc}}
\def\mR{{\mathbb R}}
\def\mN{{\mathbb N}}
\def\mE{{\mathbb E}}
\def\mS{{\mathbb S}}
\def\mB{{\mathbb B}}
\def\mP{{\mathbb P}}
\def\lp{\left (}
\def\rp{\right )}
\def\x{{\bf x}}
\def\x{{\mathbf x}}
\def\x{{\bf x}}
\def\be{\begin{equation}}
\def\ee{\end{equation}}
\def\ba{\left[\begin{array}}
\def\ea{\end{array}\right]}
\def\x{{\bf x}}
\def\({\left (}
\def\){\right )}
\def\1{{\bf 1}}
\def\g{{\bf g}}
\def\0{{\bf 0}}
\def\cS{{\mathcal S}}
\def\cN{{\mathcal N}}
\def\cG{{\mathcal G}}
\def\bcS{\bar{\mathcal S}}
\definecolor{darkgreen}{rgb}{0, 0.4,0}
\definecolor{purplebrown}{rgb}{0.5,0.1,0.6}
\definecolor{ultclupcol}{rgb}{0.1,0.5,0.5}
\definecolor{mytrycolor}{rgb}{0.5,0.7,0.2}
\definecolor{ultclupcola}{rgb}{.5,0,.5}
\definecolor{shadebrown}{rgb}{0.1,0.1,0.9}
\definecolor{lightblue}{rgb}{0.2,0,1}
\newtcbox{\xmybox}{on line,
arc=7pt,
before upper={\rule[-3pt]{0pt}{10pt}},boxrule=0pt,
boxsep=0pt,left=6pt,right=6pt,top=0pt,bottom=0pt,enhanced, coltext=blue, colback=white!10!yellow}
\newtcbox{\xmyboxa}{on line,
arc=7pt,
before upper={\rule[-3pt]{0pt}{10pt}},boxrule=0pt,
boxsep=0pt,left=6pt,right=6pt,top=0pt,bottom=0pt,enhanced, colback=white!10!yellow}
\newtcbox{\xmyboxb}{on line,
arc=7pt,
before upper={\rule[-3pt]{0pt}{10pt}},boxrule=1pt,colframe=darkgreen!100!blue,
boxsep=0pt,left=6pt,right=6pt,top=0pt,bottom=0pt,enhanced, colback=white!10!yellow}
\newtcbox{\xmyboxc}{on line,
arc=7pt,
before upper={\rule[-3pt]{0pt}{10pt}},boxrule=.7pt,colframe=blue!100!blue,
boxsep=0pt,left=6pt,right=6pt,top=0pt,bottom=0pt,enhanced, coltext=blue, colback=white!10!yellow}
\newtcbox{\xmytboxa}{on line,
arc=7pt,
before upper={\rule[-3pt]{0pt}{10pt}},boxrule=.0pt,colframe=pink!50!yellow,
boxsep=0pt,left=6pt,right=6pt,top=0pt,bottom=0pt,enhanced, coltext=white, colback=blue!40!red}
\newtcbox{\xmytboxb}{on line,
arc=7pt,
before upper={\rule[-3pt]{0pt}{10pt}},boxrule=.0pt,colframe=pink!50!yellow,
boxsep=0pt,left=6pt,right=6pt,top=0pt,bottom=0pt,enhanced, coltext=white, colback=white!40!green}
\newcommand\subsubsubsection{\@startsection{paragraph}{4}{\z@}{-2.5ex\@plus -1ex \@minus -.25ex}{1.25ex \@plus .25ex}{\normalfont\normalsize\bfseries}}
\newcommand\subsubsubsubsection{\@startsection{subparagraph}{5}{\z@}{-2.5ex\@plus -1ex \@minus -.25ex}{1.25ex \@plus .25ex}{\normalfont\normalsize\bfseries}}
\newtheorem{theorem}{Theorem}
\newtheorem{corollary}{Corollary}
\newtheorem{remark}{Remark}
\begin{document}

\begin{singlespace}

\title {Ground state energies of multipartite $p$-spin models  -- partially lifted RDT view 
}
\author{
\textsc{Mihailo Stojnic
\footnote{e-mail: {\tt flatoyer@gmail.com}} }}
\date{}
\maketitle

\centerline{{\bf Abstract}} \vspace*{0.1in}

We consider ground state energies (GSE) of multipartite $p$-spin models. Relying on partially lifted random duality theory (pl RDT) concepts we introduce an analytical mechanism that produces easy to compute lower and upper GSE bounds for \emph{any} spin sets. We uncover that these bounds actually match in case of fully spherical sets thereby providing optimal GSE values for spherical multipartite pure $p$-spin models. Numerical evidence further suggests that our upper and lower bounds may match even in the Ising scenarios. As such developments are rather intriguing, we formulate several questions regarding the connection between our bounds matching generality on the one side and the spin sets structures on the other.

\vspace*{0.25in} \noindent {\bf Index Terms: Multipartite p-spin models; Ground state energy; Random duality theory}.

\end{singlespace}

\section{Introduction}
\label{sec:back}

In \cite{SheKir72} the famous Sherrington-Kirkpatrick (SK) model was introduced as a long range antipode to the  Edwards-Anderson (EW) nearest neighbor  corresponding one from  \cite{EdwAnd75}.   After early analytical \cite{SheKir72} and computational \cite{SheKir78} recognitions that the so-called \emph{replica-symmetry} (RS) ansatz of \cite{EdwAnd75} is unsustainable, Parisi's \emph{replica-symmetry breaking} (RSB) refinement  \cite{Par79,Parisi80,Par80} turned out to be a monumental discovery that would go on to shape the research in a variety of different fields including pure and applied mathematics, information theory, machine learning, and algorithmic computer science. While mathematically rigorous confirmations of various replica predictions slowly started appearing in the eighties of the last century, it took close to 25 years until Guerra \cite{Guerra03} and Talagrand \cite{TalSK06} proved the original Parisi SK RSB formalism. Soon thereafter Panchenko  \cite{Pan10,Pan10a,Pan13,Pan13a} reproved these results while additionally establishing the \emph{ultrametricity} \cite{Par83}. Relying on the so-called A.S.S. scheme \cite{Aizetal03} and Ghirlanda-Guerra identities \cite{Pan10,Pan10a} he developed methods that were sufficiently powerful to handle mixed $p$-spin SK models for any $p$ thereby complementing  the original Talagrand 's proof for pure even $p$-spin ones (Ising $p$-spin models were considered within statistical physics by Derrida in \cite{Derrida80}; see also, \cite{Derrida81}). In \cite{ChenSphSK13}, Chen further extended Panchenko's Ising SK considerations to encompass spherical mixed $p$-spin ones as well (this complemented Talagrand's spherical pure even $p$-spin results from \cite{TalSph06}; for corresponding statistical physics considerations of $p$-spin spherical models see, e.g., \cite{KostSKSph76,Crisanti92}). Due to their practical importance/relevance and a direct connection to optimization and algorithms, the so-called ground state energy (GSE) regimes were often treated separately. For example, variants of the Parisi's formulas have been developed particularly for such regimes for both, the Ising  \cite{AuffChen17}  and the spherical SK models  \cite{JagTob17,ChenSen17}.

All of the above developments triggered an avalanche of excellent followup results that further uncovered many beautiful mathematical and spin glasses properties \cite{Talbook11a,Talbook11b,AuffChen15,Auffetal20,JagTob16,MourratPan20,HuangSell23a,Gametal23} as well as a large set of remarkable algorithmic consequences \cite{Gamar21,GamJag21,AlaouiMS22,AlaouiMS25,HuangS22,HuangSell23,HuangSell24,Subag24,Subag21,Montanari19}. Along the same lines, over the last 15 years many extensions of classical Ising and spherical SK models were considered as well \cite{ChenMourrat25,Mourrat23,PanMult15,PanPotts18,ChenIM25,Chenetal21,Chenetal23,Subag23,ChenPan18,Barraetal15,Ko20,DeyWu21}.
 Of our particular interest are the so-called bipartite  or (more generally) multipartite $p$-spin models \cite{BaikLee20,Barra2011,McKenna24,Kivimae23,Little74,Hop82,Pan18,MourratInf24,StojnicMoreSophHopBnds10,StojnicAsymmLittBnds11,AuffChen14,Amari72,AGS87,AmiGutSom85,Barraetal18,ContGal08,Fedele12,Albetal21}. These types of models are directly connected to statistical physics \cite{KorFyodShen87,KorFyodShen87a,KorShen85}, cognitive learning \cite{Little74,Hop82,KroHop16,Amari72,Hebb49,PasFig78}, random tensors \cite{Bandetal25,DarMc24,Rosetal19,BAetal19,ChenHL21,Lesetal17,JagLM20},  quantum entanglement  \cite{Shim95,BL01,WG03},  and machine learning and neural networks concepts \cite{DeanRit01,AGS87,AmiGutSom85,Barraetal18,Rametal21,Stojnichebhop24,Newman88,MPRV87,FST00,Louk94,Louk97,PasShchTir94,ShchTir93,BarGenGueTan10,BarGenGueTan12,BovGay98,Tal98}. A nice introductory overview of the single partite, bipartite, and multipartite models together with direct connections to some of the most relevant application scenarios can be found in \cite{MourratInf24}. Throughout  recent literature models of this type have been often studied within a wider -- so-called \emph{multi-species} -- group of models  \cite{FedCon11,ChenIM25,Pan18,Barraetal15,Batesetal19,BatesSohn22a,BatesSohn22,Subagtap23a,Subagtap23b,BatesSohn25}. As in single-species scenarios, two particular spin sets, Ising and spherical, are the standard benchmarks with the Ising variants being  analytically much harder. To handle Ising multi-species models presence of the convexity of certain overlap functionals is usually required \cite{Pan18,Barraetal15} (under similar convexity assumptions,  spherical counterparts were handled in \cite{Batesetal19,BatesSohn22a,BatesSohn22} as well). On the other hand, specialization from multi-species to multipartite models disallows the needed convexity. While in such contexts results of \cite{Pan18} maintain a lower-bounding character, the bounds they produce often become very loose. Several notable exceptions appeared very recently. \cite{BatesSohn25}  studied so-called balanced multi-species models where solid lower bounds were obtained that in certain cases (high-temperature regimes, convex overlap functionals, certain spherical regimes) even match the known upper ones (see also \cite{Issa24} for related results in the context of vector spin models). \cite{DarMc24} connected studying tensors injective norms \cite{Groth52,Lim21}  and geometric quantum entanglement \cite{Shim95,BL01,WG03}   and, through a utilization of spectral methods and analysis of critical points complexity, obtained upper bounds that correspond to the spherical lower bounds of \cite{BatesSohn25}.  On the other hand, relying on a TAP equations based approach, Subag in \cite{Subagtap23a} developed a general method for handling spherical multi-species models and in \cite{Subagtap23b} specialized it to pure models (for related general TAP approaches, see also, e.g., \cite{Chenetal23,Chenetal21}). In particular, Subag's methods circumvent the above mentioned convexity requirement and thereby allow automatic translation to multipartite spherical mixed $p$-spin models. However, they still do require convergence properties of certain energy mixtures.

 Away from spherical scenarios multipartite models remain notoriously difficult and getting even solid bounds is not an easy task. For bipartite models upper/lower bounds are given in \cite{StojnicAsymmLittBnds11} (they are given for the Ising spin sets but the presented concepts automatically apply to any sets). The lower bounds from  \cite{StojnicAsymmLittBnds11} match the RSB predictions of Brunetti, Parisi, and Ritort \cite{BruParRit92}. In particular, studying the so-called (asymmetric) Little model via replica methods, \cite{BruParRit92} postulated that it should essentially resemble the behavior of the (sum of two) SK Ising models. Lower-bounding effect of such a formulation was basically rigorously confirmed in \cite{StojnicAsymmLittBnds11}.

 Moving to multipartite scenario makes things even harder. While it is simple to rewrite  multipartite variant of fully lifted random duality theory (fl RDT) \cite{Stojnicflrdt23}, its practical utilizations eventually heavily rely on numerical evaluations. Since the parametric complexity  grows exponentially as the number of spin sets increases, complexity of conducting all underlying evaluations may become prohibitively large already for multipartite systems with $p\sim 5-10$ spin sets. This effectively poses as an imperative search for computationally more efficient characterizations and we provide such a characterization in this paper. The method that we present corresponds to partially lifted (pl) RDT and produces much easier to compute both lower and upper multipartite pure $p$-spin GSE bounds for any type of spin sets. We uncover that these bounds actually match in case of fully spherical sets thereby providing optimal GSE values. Furthermore, we show that optimal GSE values match the above mentioned conditional prediction of \cite{Subagtap23b}, the upper bound of \cite{DarMc24}, and the lower bound of \cite{BatesSohn25}. We show that the numerical agreement between results od \cite{Subagtap23b} and \cite{DarMc24} observed in \cite{DarMc24} is actually fully analytical as well. Moreover, a strong numerical evidence suggests that our lower and upper bounds may match even in the Ising scenarios. Such developments are rather intriguing and allow to draw a  parallel with  \cite{Stojnicl1RegPosasym} where  the compressed sensing large deviations were settled through a consideration of more general convex sets. Moreover, we utilize such a parallel  towards the end of the paper to formulate several questions regarding universality of spin sets for which the matching of the presented bounds happens.

\section{Main results}
\label{sec:res}

We start by recalling that for an integer $n\in\mN$, standard normal matrix $A\in\mR^{n\times n}$, and set $\mB^n \triangleq \left \{ \x| \x\in\mR^n, \x_i^2=\frac{1}{n}\right \}$ (the vertices of the $n$-dimensional binary cube),  the SK's ground state energy is given as
\begin{eqnarray}\label{eq:inteq1}
\xi^0_{sk} = \lim_{n\rightarrow\infty} \frac{1}{\sqrt{n}}\mE_A \max_{\x\in\mB^n} \x^TA\x.
\end{eqnarray}
The spherical analogue is
\begin{eqnarray}\label{eq:inteq1a}
\xi^0_{sph} = \lim_{n\rightarrow\infty} \frac{1}{\sqrt{n}}\mE_A \max_{\x\in\mS^n} \x^TA\x,
\end{eqnarray}
where $\mS^n=\{\x|\|\x\|_2=1\}$, is the unit sphere in $\mR^n$. One then easily recognizes a generic analogue
\begin{eqnarray}\label{eq:inteq1b}
\xi^0(\cS) = \lim_{n\rightarrow\infty} \frac{1}{\sqrt{n}}\mE_A \max_{\x\in\cS} \x^TA\x,
\end{eqnarray}
where $\cS\subseteq\mS^n$ (as will be clear throughout the presentation, restricting $\cS$ to the unit sphere is not needed but will make the writing way more elegant; along the same lines, to further lighten the presentation, the subscript next to $\mE$ indicating the source of randomness will often be omitted as well). Clearly, $\xi^0_{sk}=\xi^0(\mB^n)$ and $\xi^0_{sph}=\xi^0(\mS^n)$.

The above concepts assume the so-called $2$-spin interactions. Extension to $p$-spin ($p\geq 2$) gives
\begin{eqnarray}\label{eq:inteq1b}
\xi^0(p;\cS) = \lim_{n\rightarrow\infty} \frac{1}{\sqrt{n}}\mE_A \max_{\x\in\cS} \sum_{i_1,i_2,\dots,i_p=1}^n A_{i_1,i_2,\dots,i_p}\prod_{j=1}^{p}\x_{i_j},
\end{eqnarray}
with $A_{i_1,i_2,\dots,i_p}$  being independent standard normals and $\x_{i_j}$ being the $i_j$-th component of $\x$. Clearly, by definition, $\xi^0(\cS)\triangleq \xi^0(2;\cS)$. Our interest is in the so-called multipartite $p$-spin models where $j$-th interacting spins belong to (potentially different) sets  $\cS^{(j)},1\leq j\leq p$. The corresponding ground state free energy is then given as
\begin{eqnarray}\label{eq:inteq1c}
\xi(p;\bcS) = \lim_{n\rightarrow\infty} \frac{1}{\sqrt{n}}\mE_A \max_{\x^{(j)}\in\cS^{(j)}} \sum_{i_1,i_2,\dots,i_p=1}^n A_{i_1,i_2,\dots,i_p}\prod_{j=1}^{p}\x_{i_j}^{(j)},
\end{eqnarray}
where $\bcS=\lp \cS^{(j)}\rp_{j=1,\dots,p}$ is a $p$-dimensional array consisting of sets $\cS^{(j)}$.

The following theorem provides a generic \emph{non-asymptotic} strategy to upper/lower bound $\xi(p;\bcS) $.

\begin{theorem}
\label{thm:thm1}
  Consider  $n\in\mN$ and  $p\in 2\mN$. Let $A_{i_1,i_2,\dots,i_p}\sim \cN(0,1)$ for any choice of integers $i_j\in\{1,2,\dots,n\},1\leq j \leq p$. Also, let $g$ be a standard normal variable and for $j\in\{1,2,\dots,p\}$ let $\g^{(j)}\in \mR^{n\times 1}$ and $A^{(j)}\in \mR^{n\times n}$ be comprised of standard normals. Assume that all random variables are independent among themselves. Moreover, let $\cS^{(j)}\subseteq\mS^n, 1\leq j\leq p$ and for real scalars $c_3>0$, $c_{3,u}>0$, and $c_{3,l}>0$  set $\bar{\g}\triangleq\lp \g^{(j)}\rp_{j=1,\dots,p}$, $\bar{A}\triangleq\lp A^{(j)}\rp_{j=1,\dots,p}$, $\bcS\triangleq\lp \cS^{(j)}\rp_{j=1,\dots,p}$, and
  \begin{eqnarray}
   \label{eq:thm1eq1}
\varphi(p;\bcS,n) & = &  \max_{\x^{(j)}\in\cS^{(j)}} \sum_{i_1,i_2,\dots,i_p=1}^n A_{i_1,i_2,\dots,i_p}\prod_{j=1}^{p}\x_{i_j}^{(j)} \nonumber \\
\xi(p;\bcS,n,c_3) & = &  \frac{1}{\sqrt{n}} \frac{1}{c_3}\log \lp  \mE_A e^{c_3 \varphi(p;\bcS,n) } \rp \nonumber \\
\xi_u(p;\bcS,n,c_3) & = & \frac{1}{\sqrt{n}} \lp -\frac{c_{3}}{2}(p-1)  + \frac{1}{c_{3}} \log\lp  \mE_{\bar{\g}} e^{c_{3}\max_{\x^{(j)}\in\cS^{(j)}} \sum_{j=1}^p \lp\g^{(j)}\rp^T\x^{(j)} } \rp \rp
\nonumber \\
\xi_l(p;\bcS,n,c_3) & = &   \frac{1}{\sqrt{ n}} \frac{1}{c_{3}} \log \lp \mE_{\bar{A}} e^{ \sqrt{p}^{-1}c_{3} \max_{\x^{(j)}\in\cS^{(j)}} \sum_{j=1}^p \sum_{i_1,i_2,\dots,i_p=1}^n A^{(j)}_{i_1,i_2,\dots,i_p}\prod_{k=1}^{p}\x_{i_k}^{(j)}
 } \rp.
 \end{eqnarray}
Then
\begin{eqnarray}
   \label{eq:thm1eq2}
\xi_l(p;\bcS,n,c_{3,l})  \leq \xi(p;\bcS,n,c_{3,l}) \quad \mbox{and} \quad  \xi(p;\bcS,n,c_{3,u})   \leq  \xi_u(p;\bcS,n,c_{3,u}).
\end{eqnarray}
\end{theorem}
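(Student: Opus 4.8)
The plan is to read each of the three functionals in (\ref{eq:thm1eq1}) as a (normalized and shifted) exponential-moment-of-supremum $\frac{1}{\sqrt{n}}\frac{1}{c}\log\mE e^{c\sup Z}$ of a centered Gaussian process $Z$ indexed by the product constraint set $\prod_{j=1}^{p}\cS^{(j)}$, and then to compare these processes by a Gaussian interpolation (Slepian-type) argument. First I would record the three covariance kernels. Write $t_j=(\x^{(j)})^T\y^{(j)}$ for the pairwise overlaps, so $t_j\in[-1,1]$ because $\cS^{(j)}\subseteq\mS^n$. A direct computation gives: the $p$-tensor process $X_{(\x^{(j)})}:=\sum_{i_1,\dots,i_p}A_{i_1,\dots,i_p}\prod_{j=1}^p\x_{i_j}^{(j)}$ (so that $\varphi(p;\bcS,n)=\sup_{\x^{(j)}\in\cS^{(j)}}X_{(\x^{(j)})}$) has kernel $\prod_{j=1}^{p}t_j$; the linear process $Y_{(\x^{(j)})}:=\sum_{j=1}^{p}(\g^{(j)})^T\x^{(j)}$ has kernel $\sum_{j=1}^{p}t_j$; and the decoupled process $W_{(\x^{(j)})}:=\sum_{j=1}^{p}\sum_{i_1,\dots,i_p}A^{(j)}_{i_1,\dots,i_p}\prod_{k=1}^p\x_{i_k}^{(j)}$ has kernel $\sum_{j=1}^{p}t_j^{p}$, hence $\sqrt{p}^{-1}W$ has kernel $\frac{1}{p}\sum_{j=1}^{p}t_j^{p}$. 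The independence of $A$, of the $\g^{(j)}$'s and of the $A^{(j)}$'s is used here, and the restriction to the unit sphere is exactly what makes the variances of $X$ and $\sqrt{p}^{-1}W$ constant and equal to $1$, and that of $Y$ constant and equal to $p$.

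The core tool is the comparison principle: if $Z^{(1)},Z^{(2)}$ are centered Gaussian processes on a common index set, with identical variance functions and $\mE[Z^{(1)}_aZ^{(1)}_b]\geq\mE[Z^{(2)}_aZ^{(2)}_b]$ for all $a,b$, then $\mE e^{c\sup_a Z^{(1)}_a}\leq\mE e^{c\sup_a Z^{(2)}_a}$ for every $c>0$. For a finite index set I would prove this by interpolation: with $Z_a(t)=\sqrt{t}\,Z^{(1)}_a+\sqrt{1-t}\,Z^{(2)}_a$ (independent copies) and the soft-max $\Phi_\beta(\{z_a\})=\frac{1}{\beta}\log\sum_a e^{\beta z_a}$, differentiating $G(t)=\mE e^{c\Phi_\beta(\{Z_a(t)\})}$ and applying Gaussian integration by parts makes the diagonal contributions cancel (equal variances) and leaves $G'(t)=\frac{c}{2}(c-\beta)\,\mE\!\big[e^{c\Phi_\beta}\sum_{a,b}(\mE[Z^{(1)}_aZ^{(1)}_b]-\mE[Z^{(2)}_aZ^{(2)}_b])\mu_a\mu_b\big]$, where $\mu_a\geq0$ are the $\beta$-Gibbs weights; the double sum is nonnegative by hypothesis and $c-\beta<0$ once $\beta>c$, so $G'\leq0$, hence $G(1)\leq G(0)$, and $\beta\to\infty$ (dominated convergence) gives the claim. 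For a general $\cS^{(j)}\subseteq\mS^n$ one exhausts the product set by finite subsets and passes to the limit using the a.s.\ continuity of the three processes and monotone convergence; alternatively one simply invokes Slepian's inequality, which gives stochastic domination between the two suprema and hence the inequality for the increasing map $u\mapsto e^{cu}$.

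For the lower bound I would apply the principle with $Z^{(1)}=\sqrt{p}^{-1}W$ and $Z^{(2)}=X$: the variances both equal $1$, and the required kernel ordering $\frac{1}{p}\sum_{j=1}^{p}t_j^{p}\geq\prod_{j=1}^{p}t_j$ on $[-1,1]^p$ is precisely the place where $p\in 2\mN$ is used -- then each $t_j^{p}\geq0$, and AM--GM gives $\frac{1}{p}\sum_j t_j^{p}\geq\big(\prod_j t_j^{p}\big)^{1/p}=\prod_j|t_j|\geq\prod_j t_j$. Thus $\mE_{\bar{A}}e^{\sqrt{p}^{-1}c_{3,l}\sup W}\leq\mE_A e^{c_{3,l}\sup X}$, and taking $\frac{1}{\sqrt{n}c_{3,l}}\log$ of both sides is exactly $\xi_l(p;\bcS,n,c_{3,l})\leq\xi(p;\bcS,n,c_{3,l})$.

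For the upper bound I would first equalize variances by an independent Gaussian shift: let $g\sim\cN(0,1)$ be independent of everything and set $X'_{(\x^{(j)})}:=X_{(\x^{(j)})}+\sqrt{p-1}\,g$, so that $\sup X'=\sup X+\sqrt{p-1}\,g$ and therefore $\mE e^{c_3\sup X'}=e^{c_3^2(p-1)/2}\,\mE_A e^{c_3\sup X}$. Now $X'$ has constant variance $p$ (matching $Y$) and kernel $\prod_{j}t_j+(p-1)$, and the ordering $\prod_{j}t_j+(p-1)-\sum_{j}t_j\geq0$ holds on $[-1,1]^p$ because this function is nonincreasing in each $t_j$ (its $t_j$-partial derivative equals $\prod_{k\neq j}t_k-1\leq0$) and vanishes at $t_1=\dots=t_p=1$. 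Hence the principle gives $e^{c_3^2(p-1)/2}\,\mE_A e^{c_3\sup X}=\mE e^{c_3\sup X'}\leq\mE_{\bar{\g}}e^{c_3\sup Y}$, and taking $\frac{1}{\sqrt{n}c_3}\log$ and moving the constant across yields $\xi(p;\bcS,n,c_{3,u})\leq\xi_u(p;\bcS,n,c_{3,u})$. The main difficulty lies in the comparison principle for the exponential moment of a \emph{supremum} (rather than for $\mE\log\sum e^{c(\cdot)}$ or for $\mE\sup$): one has to track the sign carefully -- it is exactly $c-\beta<0$ as $\beta\to\infty$ that turns ``more correlated'' into ``smaller $\mE e^{c\sup}$'' -- and to deal with the measurability and finiteness of suprema over infinite $\cS^{(j)}$; the remaining content is the two elementary overlap inequalities above.
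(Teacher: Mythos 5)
Your proposal is correct and follows essentially the same route as the paper: the identical pair of Slepian/Gordon-type comparisons (tensor-plus-$\sqrt{p-1}\,g$ process versus the linear process for the upper bound, the $\sqrt{p}$-scaled tensor versus the decoupled sum of independent tensors with AM--GM and $p\in 2\mN$ for the lower bound), applied to the exponential moment of the supremum with matched variances on $\prod_j\cS^{(j)}\subseteq(\mS^n)^p$. The only cosmetic difference is that you verify $\prod_j t_j+(p-1)\geq\sum_j t_j$ by coordinatewise monotonicity while the paper uses a telescoping induction, and you supply your own interpolation/soft-max proof of the comparison principle where the paper cites Gordon's result.
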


\begin{proof}
  We split the proof into two parts. The first part relates to the upper bound and the second to the lower bound.

\noindent  \underline{\emph{Upper bound}}: We define two centered Gaussian processes indexed by array $\bar{\x}=\lp \x^{(j)}\rp_{j=1,\dots,p}$
  \begin{eqnarray}
\label{eq:mr1}
\cG (\bar{\x}) & \triangleq & \cG (\x^{(1)},\x^{(2)},\dots,\x^{(p)})  \triangleq \sum_{i_1,i_2,\dots,i_p=1}^n A_{i_1,i_2,\dots,i_p}\prod_{j=1}^{p}\x_{i_j}^{(j)}   + g\sqrt{p-1}  \nonumber   \\
\cG_u (\bar{\x}) & \triangleq & \cG_u (\x^{(1)},\x^{(2)},\dots,\x^{(p)})  \triangleq \sum_{j=1}^p \lp\g^{(j)}\rp^T\x^{(j)} .
  \end{eqnarray}
Taking two arrays $\bar{\x}^{(a_1)}=\lp \x^{(j,a_1)}\rp_{j=1,\dots,p}$ and $\bar{\x}^{(a_2)}=\lp \x^{(j,a_2)}\rp_{j=1,\dots,p}$ we then write
  \begin{eqnarray}
\label{eq:mr2}
\mE \cG (\bar{\x}^{(a_1)})\cG (\bar{\x}^{(a_2)} )   & =  & \prod_{j=1}^{p}  \sum_{i_j=1}^n   \x_{i_j}^{(j,a_1)}  \x_{i_j}^{(j,a_2)}  + p-1
=  \prod_{j=1}^{p}  \lp \x^{(j,a_1)} \rp^T\x^{(j,a_2)}  + p-1 \nonumber   \\
\mE \cG_u (\bar{\x}^{(a_1)})\cG_u (\bar{\x}^{(a_2)} )   & =  & \sum_{j=1}^p \sum_{i_j=1}^n \x_{i_j}^{(j,a_1)}\x_{i_j}^{(j,a_2)}
=
\sum_{j=1}^p
 \lp \x^{(j,a_1)} \rp^T\x^{(j,a_2)}.
  \end{eqnarray}
  Clearly,
 \begin{eqnarray}
\label{eq:mr2a}
\mE \cG (\bar{\x}^{(a_1)})\cG (\bar{\x}^{(a_1)} )
=  \prod_{j=1}^{p}  \lp \x^{(j,a_1)} \rp^T\x^{(j,a_1)}  + p-1
= p =\sum_{j=1}^p
 \lp \x^{(j,a_1)} \rp^T\x^{(j,a_1)} =
\mE \cG_u (\bar{\x}^{(a_1)})\cG_u (\bar{\x}^{(a_1)} )  .
  \end{eqnarray}
On the other hand,   after setting $a^{(j)}\triangleq\lp \x^{(j,a_1)} \rp^T\x^{(j,a_2)} $ and noting that $|a^{(j)}|\leq 1$ we have for any $k\in\{1,2,\dots, p\}$
 \begin{eqnarray}
\label{eq:mr3}
  \mE \cG (\bar{\x}^{(a_1)})\cG (\bar{\x}^{(a_2)} )
 -
\mE \cG_u (\bar{\x}^{(a_1)})\cG_u (\bar{\x}^{(a_2)} )
 & =  &
  \prod_{j=1}^{p} a^{(j)}  + p-1
-\sum_{j=1}^p
 a^{(j)}
   \nonumber \\
& \geq &  - \prod_{j=1}^{k} a^{(j)} -\sum_{j=k+1}^p
 a^{(j)}
 + p-k  +  \prod_{j=1}^{p} a^{(j)}.
  \end{eqnarray}
To see this, we first observe that for $k=1$ (\ref{eq:mr3}) holds with equality. Assuming further that it holds for a fixed $k<p-1$ we then write
 \begin{eqnarray}
\label{eq:mr4}
  \mE \cG (\bar{\x}^{(a_1)})\cG (\bar{\x}^{(a_2)} )
 -
\mE \cG_u (\bar{\x}^{(a_1)})\cG_u (\bar{\x}^{(a_2)} )
 & \geq &  - \prod_{j=1}^{k} a^{(j)} -\sum_{j=k+1}^p
 a^{(j)}
 + p-k  +  \prod_{j=1}^{p} a^{(j)}
 \nonumber \\
 & = &  \lp 1 - a^{(k+1)}\rp \lp 1 - \prod_{j=1}^{k} a^{(j)} \rp
 \nonumber \\
& &  - \prod_{j=1}^{k+1} a^{(j)} -\sum_{j=k+2}^p
 a^{(j)}
 + p-k-1  +  \prod_{j=1}^{p} a^{(j)}
 \nonumber \\
& \geq &  - \prod_{j=1}^{k+1} a^{(j)} -\sum_{j=(k+1)+1}^p a^{(j)}
 + p-(k+1)  +  \prod_{j=1}^{p} a^{(j)}.
  \end{eqnarray}
The right hand side of the last inequality  is (\ref{eq:mr3})  for $k+1$ which then by induction implies that (\ref{eq:mr3}) indeed holds for $1\leq k\leq p-1$. Taking $k=p-1$ further gives
 \begin{eqnarray}
\label{eq:mr5}
  \mE \cG (\bar{\x}^{(a_1)})\cG (\bar{\x}^{(a_2)} )
 -
\mE \cG_u (\bar{\x}^{(a_1)})\cG_u (\bar{\x}^{(a_2)} )
 & \geq &  - \prod_{j=1}^{p-1} a^{(j)} -\sum_{j=p}^p
 a^{(j)}
 + 1  +  \prod_{j=1}^{p} a^{(j)}
\nonumber
\\
& = &
 \lp 1-a^{(j)}\rp \lp 1 - \prod_{j=1}^{p-1} a^{(j)} \rp
  \geq   0.
  \end{eqnarray}
Combining (\ref{eq:mr2a}) and (\ref{eq:mr5}) with results of \cite{Gordon85} (which are a special case of concepts discussed in Corollary 3 in \cite{Stojnicgscompyx16}  and in Corollary 4 in \cite{Stojnicgscomp16}) we obtain
\begin{eqnarray}
\label{eq:mr6}
  \mE \max_{\bar{\x}} e^{c_{3,u}\cG(\bar{\x})} &\leq &   \mE \max_{\bar{\x}} e^{c_{3,u}\cG_u(\bar{\x})},
\end{eqnarray}
where $\max_{\bar{\x}}$ means $\max_{\x^{(j)}\in \cS^{(j)}}$. A further combination of (\ref{eq:mr1}) and (\ref{eq:mr6}) gives
\begin{eqnarray}
\label{eq:mr7}
  \mE \max_{\bar{\x}} e^{c_{3,u}\lp \sum_{i_1,i_2,\dots,i_p=1}^n A_{i_1,i_2,\dots,i_p}\prod_{j=1}^{p}\x_{i_j}^{(j)}   + g\sqrt{p-1}   \rp  } &\leq &   \mE \max_{\bar{\x}} e^{c_{3,u} \sum_{j=1}^p \lp\g^{(j)}\rp^T\x^{(j)}   },
  \end{eqnarray}
which is equivalent to
\begin{eqnarray}
\label{eq:mr8}
   \mE e^{c_{3,u} \max_{\bar{\x}}  \lp \sum_{i_1,i_2,\dots,i_p=1}^n A_{i_1,i_2,\dots,i_p}\prod_{j=1}^{p}\x_{i_j}^{(j)}    \rp  } &\leq &  e^{-\frac{c_{3,u}^2}{2}(p-1) } \mE e^{c_{3,u}  \max_{\bar{\x}}  \sum_{j=1}^p \lp\g^{(j)}\rp^T\x^{(j)}   } ,
\end{eqnarray}
and
\begin{eqnarray}
\label{eq:mr9}
   \log \lp \mE e^{c_{3,u} \max_{\bar{\x}}  \lp \sum_{i_1,i_2,\dots,i_p=1}^n A_{i_1,i_2,\dots,i_p}\prod_{j=1}^{p}\x_{i_j}^{(j)}    \rp  }
   \rp
   &\leq &
   -\frac{c_{3,u}^2}{2}(p-1) + \log \lp
    \mE e^{c_{3,u}  \max_{\bar{\x}}  \sum_{j=1}^p \lp\g^{(j)}\rp^T\x^{(j)}   }
    \rp, \nonumber \\
\end{eqnarray}
which (after choosing $c_3=c_{3,u}$) establishes the second inequality in (\ref{eq:thm1eq2}).

\noindent \underline{\emph{Lower bound}}: For the lower bound portion we  consider the following two centered Gaussian processes
  \begin{eqnarray}
\label{eq:mr10}
\cG_x (\bar{\x}) & \triangleq &  \cG_x (\x^{(1)},\x^{(2)},\dots,\x^{(p)})  \triangleq \sqrt{p} \sum_{i_1,i_2,\dots,i_p=1}^n A_{i_1,i_2,\dots,i_p}\prod_{j=1}^{p}\x_{i_j}^{(j)}   \nonumber   \\
\cG_l (\bar{\x}) & \triangleq & \cG_u (\x^{(1)},\x^{(2)},\dots,\x^{(p)})  \triangleq \sum_{j=1}^p \sum_{i_1,i_2,\dots,i_p=1}^n A^{(j)}_{i_1,i_2,\dots,i_p}\prod_{k=1}^{p}\x_{i_k}^{(j)} .
  \end{eqnarray}
Taking again two arrays $\bar{\x}^{(a_1)}=\lp \x^{(j,a_1)}\rp_{j=1,\dots,p}$ and $\bar{\x}^{(a_2)}=\lp \x^{(j,a_2)}\rp_{j=1,\dots,p}$ we write
  \begin{eqnarray}
\label{eq:mr12}
\mE \cG_x (\bar{\x}^{(a_1)})\cG_x (\bar{\x}^{(a_2)} )   & =  &  p \prod_{j=1}^{p}  \sum_{i_j=1}^n   \x_{i_j}^{(j,a_1)}  \x_{i_j}^{(j,a_2)}
=  p \prod_{j=1}^{p}  \lp \x^{(j,a_1)} \rp^T\x^{(j,a_2)} =  p \prod_{j=1}^{p}  a^{(j)} \nonumber   \\
\mE \cG_l (\bar{\x}^{(a_1)})\cG_l (\bar{\x}^{(a_2)} )   & =  & \sum_{j=1}^p  \lp \sum_{i_j=1}^n \x_{i_j}^{(j,a_1)}\x_{i_j}^{(j,a_2)} \rp^p
=
\sum_{j=1}^p \lp a^{(j)}\rp^p.
  \end{eqnarray}
  Clearly,
 \begin{eqnarray}
\label{eq:mr12a}
\mE \cG_x (\bar{\x}^{(a_1)})\cG_x (\bar{\x}^{(a_1)} )
 = p  =
\mE \cG_l (\bar{\x}^{(a_1)})\cG_l (\bar{\x}^{(a_1)} )  .
  \end{eqnarray}
Moreover, arithmetic-geometric mean inequality gives
 \begin{eqnarray}
\label{eq:mr13}
 \mE \cG_l (\bar{\x}^{(a_1)})\cG_l (\bar{\x}^{(a_2)} )  =
 \sum_{j=1}^p \lp a^{(j)}\rp^p  = p \frac{ \sum_{j=1}^p \lp a^{(j)}\rp^p }{p}
  \geq  p \lp \prod_{j=1}^p \lp a^{(j) } \rp^p \rp^{\frac{1}{p}} =\mE \cG_x (\bar{\x}^{(a_1)})\cG_x (\bar{\x}^{(a_2)} )  .
  \end{eqnarray}
Combining (\ref{eq:mr12a}) and (\ref{eq:mr13}) with results of \cite{Gordon85}  we obtain
\begin{eqnarray}
\label{eq:mr14}
  \mE  \max_{\bar{\x}} e^{c_{3,l}\cG_l(\bar{\x})} &\leq &   \mE  \max_{\bar{\x}} e^{c_{3,l}\cG_x(\bar{\x})}.
\end{eqnarray}
A further combination of (\ref{eq:mr10}) and (\ref{eq:mr14}) gives
\begin{eqnarray}
\label{eq:mr15}
  \mE \max_{\bar{\x}} e^{c_{3,l}   \sum_{j=1}^p \sum_{i_1,i_2,\dots,i_p=1}^n A^{(j)}_{i_1,i_2,\dots,i_p}\prod_{k=1}^{p}\x_{i_k}^{(j)}   }
  &\leq &
  \mE \max_{\bar{\x}} e^{\sqrt{p} c_{3,l}\lp \sum_{i_1,i_2,\dots,i_p=1}^n A_{i_1,i_2,\dots,i_p}\prod_{j=1}^{p}\x_{i_j}^{(j)}    \rp  } ,
  \end{eqnarray}
which is equivalent to
\begin{equation}
\label{eq:mr16}
\log \lp   \mE  e^{c_{3,l}  \max_{\bar{\x}}  \sum_{j=1}^p \sum_{i_1,i_2,\dots,i_p=1}^n A^{(j)}_{i_1,i_2,\dots,i_p}\prod_{k=1}^{p}\x_{i_k}^{(j)}   } \rp
  \leq
\log \lp  \mE e^{\sqrt{p} c_{3,l}  \max_{\bar{\x}}  \sum_{i_1,i_2,\dots,i_p=1}^n A_{i_1,i_2,\dots,i_p}\prod_{j=1}^{p}\x_{i_j}^{(j)}    }  \rp.
\end{equation}
After scaling both sides by $\sqrt{p}c_{3,l}$ and choosing $c_{3}=\sqrt{p}c_{3,l}$, (\ref{eq:mr16})  is sufficient to show the first inequality in (\ref{eq:thm1eq2}) and complete the whole proof.
 \end{proof}

\begin{remark}
\label{rem:rem1}
  It is useful to note that the second inequality in (\ref{eq:thm1eq2}) actually holds for $p\in\mN$.
\end{remark}

\begin{remark}
\label{rem:rem2}
For the elegance of the exposition we have chosen $\x^{(j)}\in\mR^n$. It is trivial to rewrite everything with $\x^{(j)}\in\mR^{n_j}$ (i.e., with $\cS^{(j)}\subseteq \mS^{n_j}$). In other words, everything stated above immediately extends to spins belonging to sets of unequal dimensions (in the multi-species terminology this means that species of different length are easily incorporated as well).
\end{remark}

The following corollary can be established as well.
\begin{corollary}
\label{cor:cor1}
 Assume the setup of Theorem \ref{thm:thm1}. Recall
\begin{eqnarray}
   \label{eq:cor1eq1}
\xi_u(p;\bcS,n,c_3) & = & \frac{1}{\sqrt{n}} \lp -\frac{c_{3}}{2}(p-1)  + \frac{1}{c_{3}} \log\lp  \mE_{\bar{\g}} e^{c_{3}\max_{\x^{(j)}\in\cS^{(j)}} \sum_{j=1}^p \lp\g^{(j)}\rp^T\x^{(j)} } \rp \rp
\nonumber \\
& = & \frac{1}{\sqrt{n}} \lp -\frac{c_{3}}{2}(p-1)  + \frac{1}{c_{3}} \sum_{j=1}^p  \log\lp  \mE_{\bar{\g}} e^{c_{3}\max_{\x^{(j)}\in\cS^{(j)}} \lp\g^{(j)}\rp^T\x^{(j)} } \rp \rp,
\end{eqnarray}
and set
\begin{eqnarray}
   \label{eq:cor1eq2}
 \xi_l(p;\bcS,n,0) \triangleq \lim_{c_{3}\rightarrow 0}\xi_l(p;\bcS,n,c_{3}) & = &  \frac{1}{\sqrt{p n}}\lp \mE_{\bar{A}} \max_{\x^{(j)}\in\cS^{(j)}} \sum_{j=1}^p \sum_{i_1,i_2,\dots,i_p=1}^n A^{(j)}_{i_1,i_2,\dots,i_p}\prod_{k=1}^{p}\x_{i_k}^{(j)}
 \rp.
\end{eqnarray}
 In the thermodynamic limit
\begin{eqnarray}
   \label{eq:cor1eq3}
\xi(p;\bcS) = \lim_{n\rightarrow\infty} \frac{1}{\sqrt{n}}\mE_A \varphi(p;\bcS,n)
= \lim_{n\rightarrow\infty} \frac{1}{\sqrt{n}}\mE_A \max_{\x^{(j)}\in\cS^{(j)}} \sum_{i_1,i_2,\dots,i_p=1}^n A_{i_1,i_2,\dots,i_p}\prod_{j=1}^{p}\x_{i_j}^{(j)},
\end{eqnarray}
and
\begin{eqnarray}
   \label{eq:cor1eq4}
\lim_{n\rightarrow \infty} \xi_l(p;\bcS,n,0)  \leq \xi(p;\bcS)  \leq \min_{c_{3}>0} \lim_{n\rightarrow \infty}  \xi_u(p;\bcS,n,c_{3}).
\end{eqnarray}
Moreover, for $\cS^{(j)}=\cS,1\leq j\leq p$, set $\g=\g^{(1)}$ and
\begin{eqnarray}
   \label{eq:cor1eq5}
  \xi_u^0(p;\cS,n,c_3) & \triangleq &  \frac{ \sqrt{p}}{\sqrt{n}} \lp -\frac{c_{3}}{2}(p-1)  + \frac{1}{c_{3}} \log\lp  \mE_{\g} e^{ c_{3} \sqrt{p} \max_{\x\in\cS } \g^T\x } \rp \rp  \nonumber \\
  \xi_l^0(p;\cS,n) & \triangleq & \frac{ \sqrt{p}}{\sqrt{ n}}\lp \mE_{A} \max_{\x\in\cS} \sum_{i_1,i_2,\dots,i_p=1}^n A_{i_1,i_2,\dots,i_p}\prod_{k=1}^{p}\x_{i_k}
 \rp.
\end{eqnarray}
Then
\begin{eqnarray}
   \label{eq:cor1eq6}
\lim_{n\rightarrow \infty} \xi_l^0(p;\cS,n)  \leq \xi(p;\cS)
 \leq \min_{c_{3}>0} \lim_{n\rightarrow \infty}  \xi_u^0(p;\cS,n,c_{3}),
\end{eqnarray}
where $ \xi(p;\cS)\triangleq\xi(p;\bcS)$ when $\cS^{(j)}=\cS,1\leq j\leq p$.
 \end{corollary}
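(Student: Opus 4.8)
The plan is to obtain Corollary~\ref{cor:cor1} as the thermodynamic-limit shadow of Theorem~\ref{thm:thm1}; the genuinely new ingredients are only (i) the $c_3\downarrow 0$ evaluation of $\xi_l$, (ii) a decoupling identity for the product sets, and (iii) a Gaussian-concentration estimate that lets the normalized log-moment-generating functions be replaced by normalized expectations once $n\to\infty$. Identity (\ref{eq:cor1eq3}) merely unpacks the definitions of $\xi(p;\bcS)$ and $\varphi(p;\bcS,n)$, so it needs no argument.

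First I would dispose of the $c_3\downarrow 0$ limit and the decoupling. The standard fact that $c_3\mapsto\frac{1}{c_3}\log\lp\mE e^{c_3X}\rp$ is nondecreasing on $(0,\infty)$ and converges to $\mE X$ as $c_3\downarrow 0$---legitimate here since every relevant $X$, being a Lipschitz function of a standard Gaussian vector, has a finite moment generating function near $0$---identifies $\xi_l(p;\bcS,n,0)$ with the right-hand side of (\ref{eq:cor1eq2}). Separately, since maximizing over $\bar{\x}=\lp\x^{(j)}\rp_{j=1,\dots,p}$ imposes no coupling between the blocks and the vectors $\g^{(1)},\dots,\g^{(p)}$ (resp. the tensors $A^{(1)},\dots,A^{(p)}$) are mutually independent, each of the maxima $\max_{\x^{(j)}\in\cS^{(j)}}\sum_j\lp\g^{(j)}\rp^T\x^{(j)}$ and $\max_{\x^{(j)}\in\cS^{(j)}}\sum_j\sum_{i_1,\dots,i_p}A^{(j)}_{i_1,\dots,i_p}\prod_k\x_{i_k}^{(j)}$ splits into a sum of $p$ independent summands; for the first one the expectation of the exponential factorizes into a product, and taking logarithms gives the second line of (\ref{eq:cor1eq1}).

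Next comes the concentration step, which is where the normalization $\cS^{(j)}\subseteq\mS^n$ is used. Because each $\x^{(j)}$ is a unit vector, $\varphi(p;\bcS,n)$ is a supremum of $1$-Lipschitz linear forms in $A\in\mR^{n^p}$ (the coefficient array $\lp\prod_j\x_{i_j}^{(j)}\rp_{i_1,\dots,i_p}$ has Euclidean norm $\prod_j\|\x^{(j)}\|_2=1$) and is therefore $1$-Lipschitz, and similarly the $\bar{\g}$- and $\bar{A}$-maxima above are Lipschitz with constants that stay $O(1)$ uniformly in $n$. The Gaussian concentration inequality then gives, for any Lipschitz-of-Gaussian $X$ with constant $L$, the two-sided estimate $\mE X\le\frac{1}{c_3}\log\lp\mE e^{c_3X}\rp\le\mE X+\frac{c_3L^2}{2}$, the left half being Jensen. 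Dividing by $\sqrt n$ and letting $n\to\infty$ with $c_3$ fixed squeezes the gap to zero, so $\lim_{n\to\infty}\xi(p;\bcS,n,c_3)=\lim_{n\to\infty}\frac{1}{\sqrt n}\mE_A\varphi(p;\bcS,n)=\xi(p;\bcS)$ for every $c_3>0$, and likewise $\lim_{n\to\infty}\xi_l(p;\bcS,n,c_3)$ and $\lim_{n\to\infty}\xi_u(p;\bcS,n,c_3)$ exist (and, incidentally, do not depend on $c_3$), the former coinciding with $\lim_{n\to\infty}\xi_l(p;\bcS,n,0)$ by the monotonicity recorded above. Passing to the limit in (\ref{eq:thm1eq2}) with $c_{3,l}=c_{3,u}=c_3$ then gives $\lim_{n\to\infty}\xi_l(p;\bcS,n,0)\le\lim_{n\to\infty}\xi_l(p;\bcS,n,c_3)\le\xi(p;\bcS)$ and $\xi(p;\bcS)\le\lim_{n\to\infty}\xi_u(p;\bcS,n,c_3)$ for every $c_3>0$; taking the infimum over $c_3$ in the latter is precisely (\ref{eq:cor1eq4}).

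Finally, in the equal-set case $\cS^{(j)}=\cS$ all $\g^{(j)}$ (resp. $A^{(j)}$) are i.i.d., so each decoupled $p$-fold sum equals $p$ times a single term: at finite $n$ this turns (\ref{eq:cor1eq2}) into the identity $\xi_l(p;\bcS,n,0)=\xi_l^0(p;\cS,n)$, while in (\ref{eq:cor1eq1}) the reparametrization $c_3\mapsto\sqrt p\,c_3$---a bijection of $(0,\infty)$ leaving the infimum unchanged---turns $\xi_u(p;\bcS,n,\cdot)$ into $\xi_u^0(p;\cS,n,\cdot)$. Inserting these two identifications into (\ref{eq:cor1eq4}) yields (\ref{eq:cor1eq6}). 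I expect the concentration step to be the only place needing care: one must check that the Lipschitz constants of the three maxima remain bounded as $n$ grows---which is exactly what confining every spin set to $\mS^n$ buys---and one must invoke existence of the underlying limits $\lim_{n\to\infty}\frac{1}{\sqrt n}\mE(\cdot)$, implicit in the statement; all the rest is bookkeeping.
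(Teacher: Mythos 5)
Your argument is correct, and its skeleton is the paper's: pass the two inequalities of Theorem \ref{thm:thm1} to the thermodynamic limit, identify $\xi_l(p;\bcS,n,0)$ with the normalized expectation via the $c_3\downarrow 0$ limit of the scaled cumulant generating function, decouple over $j$ by independence, and deduce (\ref{eq:cor1eq6}) from (\ref{eq:cor1eq4}) through the bijective reparametrization $c_3\mapsto \sqrt{p}\,c_3$ (the paper's ``cosmetic change of variables''). Where you diverge is in how the log-moment-generating functions are related to means: the paper needs only the one-sided Jensen bound $\mE_A\varphi \le \frac{1}{c_3}\log\lp\mE_A e^{c_3\varphi}\rp$, which already gives $\frac{1}{\sqrt{n}}\mE_A\varphi(p;\bcS,n)\le \xi(p;\bcS,n,c_3)\le \xi_u(p;\bcS,n,c_3)$, plus the $c_3\to 0$ limit applied to both sides of the lower-bound inequality; you additionally invoke the Gaussian (Borell--TIS) concentration estimate $\frac{1}{c_3}\log\lp\mE e^{c_3X}\rp\le \mE X+\frac{c_3L^2}{2}$ with $n$-independent Lipschitz constants ($1$ for the $A$-maximum, $\sqrt{p}$ for the $\bar{\g}$- and $\bar{A}$-maxima, all correct because $\cS^{(j)}\subseteq\mS^n$). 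That extra step is valid and not needed for the stated inequalities, but it does buy a sharper side remark: at fixed $c_3$ the limits $\lim_{n\to\infty}\xi(p;\bcS,n,c_3)$ and $\lim_{n\to\infty}\xi_u(p;\bcS,n,c_3)$ equal the corresponding normalized means and are $c_3$-independent, so under the corollary's quantifier order (first $n\to\infty$ at fixed $c_3$, then $\min_{c_3>0}$) the minimization in (\ref{eq:cor1eq4}) and (\ref{eq:cor1eq6}) is actually inert; the nontrivial $c_3$-dependence exploited later in the paper enters through the non-asymptotic Corollary \ref{cor:cor2} with the scaling $c_3\to c_3\sqrt{n}$ in Theorems \ref{thm:thm3} and \ref{thm:thm4}, not through Corollary \ref{cor:cor1} at fixed $c_3$. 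Your reliance on existence of the limits $\lim_{n\to\infty}\frac{1}{\sqrt{n}}\mE(\cdot)$ is the same implicit assumption the paper makes.
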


\begin{proof}
The concavity of $\log(\cdot)$ implies
\begin{eqnarray}
\label{eq:mr17}
   c_{3,u} \mE  \max_{\bar{\x}}  \lp \sum_{i_1,i_2,\dots,i_p=1}^n A_{i_1,i_2,\dots,i_p}\prod_{j=1}^{p}\x_{i_j}^{(j)} \rp
   &\leq &
   \log \lp \mE e^{c_{3,u} \max_{\bar{\x}}  \lp \sum_{i_1,i_2,\dots,i_p=1}^n A_{i_1,i_2,\dots,i_p}\prod_{j=1}^{p}\x_{i_j}^{(j)}    \rp  }
   \rp, \nonumber \\
\end{eqnarray}
which together with (\ref{eq:mr8}) establishes the second inequality in (\ref{eq:cor1eq4}) and completes the upper bound portion of the corollary. On the other hand, the first inequality in (\ref{eq:cor1eq4}) follows automatically from the first inequality in (\ref{eq:thm1eq2}) and  (\ref{eq:cor1eq2}) after  additionally noting
\begin{eqnarray}
   \label{eq:mr18}
 \lim_{n\rightarrow \infty} \xi(p;\bcS,n,0)
 & = &
 \lim_{c_{3}\rightarrow 0}
 \xi(p;\bcS,n,c_3)  =   \lim_{n\rightarrow \infty}  \lim_{c_{3}\rightarrow 0} \frac{1}{\sqrt{n}} \frac{1}{c_3}\log \lp  \mE_A e^{c_3 \varphi(p;\bcS,n) } \rp
\nonumber \\
& = &  \lim_{n\rightarrow \infty}  \frac{1}{\sqrt{n}}   \mE_A  \varphi(p;\bcS,n) =  \xi(p;\bcS).
\end{eqnarray}
Finally, (\ref{eq:cor1eq6}) follows automatically from (\ref{eq:cor1eq4}) after a cosmetic change of variables, $c_3\rightarrow c_3\sqrt{p}$, in the second inequality.
\end{proof}

The above theorem and corollary provide bounds as functions of single partite systems. The second part of the corollary relates to the same type of single partite systems where the restricting unit sphere subsets are actually identical.   A large deviation principle (LDP) associated with such systems is discussed below and its several remarkable properties are uncovered.

\section{Large deviations}
\label{sec:ldp}

We consider function
 \begin{eqnarray}
 \label{eq:ldpeq1}
   \zeta (p;\cS,n) & \triangleq & \frac{1}{\sqrt{ n}}\lp \max_{\x\in\cS} \sum_{i_1,i_2,\dots,i_p=1}^n A_{i_1,i_2,\dots,i_p}\prod_{k=1}^{p}\x_{i_k}
 \rp,
  \end{eqnarray}
which is the  underlying randomness of $\xi_l^0(p;\cS,n) $ in Corollary \ref{cor:cor1}. Before discussing its associated LDP, we find it convenient to establish the following corollary of Theorem \ref{thm:thm1}.

\begin{corollary}
\label{cor:cor2}
Assume the setup of Theorem \ref{thm:thm1} with $\g=\g^{(j)}$, $\cS^{(j)}=\cS,1\leq j\leq p$, and
 \begin{eqnarray}
   \label{eq:cor2eq1}
 \xi_u^0(p;\cS,n,c_3) & = & \frac{\sqrt{p}}{\sqrt{n}} \lp -\frac{c_{3}}{2}(p-1)  + \frac{1}{c_{3}} \log\lp  \mE_{\g} e^{c_{3}\sqrt{p}\max_{\x\in\cS} \g^T\x } \rp \rp
\nonumber \\
\xi_l^0(p;\cS,n,c_3) & = &  \frac{\sqrt{p}}{\sqrt{n}} \frac{1}{c_{3}} \log \lp \mE_{A} e^{ c_{3}  \sqrt{n}     \zeta (p;\cS,n)  } \rp  =  \frac{\sqrt{p}}{\sqrt{n}} \frac{1}{c_{3}} \log \lp \mE_{A} e^{ c_{3} \max_{\x\in\cS }  \sum_{i_1,i_2,\dots,i_p=1}^n A_{i_1,i_2,\dots,i_p}\prod_{k=1}^{p}\x_{i_k}
 } \rp.\nonumber \\
 \end{eqnarray}
Then
\begin{eqnarray}
   \label{eq:cor2eq2}
\xi^0_l(p;\cS,n,c_{3})  \leq  \xi^0_u(p;\cS,n,c_{3}).
\end{eqnarray}
\end{corollary}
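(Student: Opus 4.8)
The plan is to obtain (\ref{eq:cor2eq2}) as a direct specialization of Theorem~\ref{thm:thm1} to the case $\cS^{(j)}=\cS$, $1\leq j\leq p$, followed by a transparent rescaling of the free parameter. The point to exploit is that when all spin sets coincide, the two ``comparison'' objectives appearing in the proof of Theorem~\ref{thm:thm1} become separable over the species index $j$: in the upper-bound direction one has $\sum_{j=1}^p\lp\g^{(j)}\rp^T\x^{(j)}$, and in the lower-bound direction $\sum_{j=1}^p\sum_{i_1,\dots,i_p}A^{(j)}_{i_1,\dots,i_p}\prod_{k=1}^p\x_{i_k}^{(j)}$, and in each the $j$-th summand depends on $\bar{\x}$ only through $\x^{(j)}\in\cS$. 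Hence $\max_{\bar{\x}}$ of such a sum equals the sum of $p$ independent single-partite maxima over $\cS$, and — since $\lp\g^{(j)}\rp_{j=1}^p$ (resp.\ $\lp A^{(j)}\rp_{j=1}^p$) are independent, each distributed as $\g$ (resp.\ $A$) — the corresponding exponential moments factor into $p$-th powers. Thus $\frac{1}{c_3}\log\mE_{\bar{\g}}e^{c_3\max_{\bar{\x}}\sum_j(\g^{(j)})^T\x^{(j)}}=\frac{p}{c_3}\log\mE_{\g}e^{c_3\max_{\x\in\cS}\g^T\x}$ — which is already the content of (\ref{eq:cor1eq1}) — and, in the same way, $\frac{1}{c_3}\log\mE_{\bar{A}}e^{\frac{c_3}{\sqrt p}\max_{\bar{\x}}\sum_j\sum_{i_1,\dots,i_p}A^{(j)}_{i_1,\dots,i_p}\prod_k\x_{i_k}^{(j)}}=\frac{p}{c_3}\log\mE_{A}e^{\frac{c_3}{\sqrt p}\max_{\x\in\cS}\sum_{i_1,\dots,i_p}A_{i_1,\dots,i_p}\prod_k\x_{i_k}}$.

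Next I would match the constants. Substituting $c_3\mapsto c_3\sqrt p$ into the decoupled form of $\xi_u(p;\bcS,n,\cdot)$ and using $\tfrac{p}{c_3\sqrt p}=\tfrac{\sqrt p}{c_3}$ turns $\frac{1}{\sqrt n}\lp-\frac{c_3\sqrt p}{2}(p-1)+\frac{p}{c_3\sqrt p}\log\mE_\g e^{c_3\sqrt p\max_{\x\in\cS}\g^T\x}\rp$ into precisely $\xi_u^0(p;\cS,n,c_3)$ of (\ref{eq:cor2eq1}); performing the same substitution in the decoupled $\xi_l(p;\bcS,n,\cdot)$ and recalling that $\sqrt n\,\zeta(p;\cS,n)=\max_{\x\in\cS}\sum_{i_1,\dots,i_p}A_{i_1,\dots,i_p}\prod_k\x_{i_k}$ gives precisely $\xi_l^0(p;\cS,n,c_3)$. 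In short, with $\widetilde c_3\triangleq c_3\sqrt p$ one has $\xi_u^0(p;\cS,n,c_3)=\xi_u(p;\bcS,n,\widetilde c_3)$ and $\xi_l^0(p;\cS,n,c_3)=\xi_l(p;\bcS,n,\widetilde c_3)$. Applying the two bounds of (\ref{eq:thm1eq2}) with $c_{3,l}=c_{3,u}=\widetilde c_3$ and chaining them through $\xi(p;\bcS,n,\widetilde c_3)$ then yields $\xi_l^0(p;\cS,n,c_3)\leq\xi(p;\bcS,n,\widetilde c_3)\leq\xi_u^0(p;\cS,n,c_3)$, which is (\ref{eq:cor2eq2}).

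I do not expect a genuine obstacle: every inequality here is inherited from the Gordon comparison already carried out in Theorem~\ref{thm:thm1}, so the only thing to watch is the arithmetic of the rescaling $c_3\mapsto c_3\sqrt p$ together with the two different normalizations ($\tfrac{1}{\sqrt n}$ in Theorem~\ref{thm:thm1} versus $\tfrac{\sqrt p}{\sqrt n}$ here). Should a self-contained derivation be preferred, one can instead rerun the two Gordon comparisons from the proof of Theorem~\ref{thm:thm1} with $\cS^{(j)}\equiv\cS$ fixed throughout; the covariance identities (\ref{eq:mr2})--(\ref{eq:mr5}) and (\ref{eq:mr12})--(\ref{eq:mr13}) hold verbatim, and the decoupling above is then applied at the very end — so nothing beyond Theorem~\ref{thm:thm1} is actually required.
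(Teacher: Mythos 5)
Your proposal is correct and follows essentially the same route as the paper: the paper's own proof (for $p\in2\mN$) is exactly the chaining of the two inequalities in (\ref{eq:thm1eq2}) under the cosmetic change $c_3\rightarrow c_3\sqrt{p}$, with the decoupling over $j$ (separability of the maxima plus independence of the $\g^{(j)}$, resp.\ $A^{(j)}$) that you spell out explicitly. The only extra content in the paper is a second, direct single-partite Gordon comparison between $\cG^0(\x)=\sum A_{i_1,\dots,i_p}\prod_j\x_{i_j}+g\sqrt{p-1}$ and $\cG^0_u(\x)=\sqrt{p}\,\g^T\x$, which removes the evenness restriction on $p$; this is not needed for the corollary as stated, so your argument suffices.
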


\begin{proof}
For $p\in2\mN$ the above statement follows automatically by connecting the two inequalities in (\ref{eq:thm1eq2}) with a cosmetic change $c_{3} = c_{3}\sqrt{p}$. It is useful to note that it can also be obtained separately for $p\in\mN$ by adapting the reasoning of Theorem \ref{thm:thm1}. Namely, one defines two centered Gaussian processes indexed by $\x\in\cS \subseteq \mS^n$
  \begin{eqnarray}
\label{eq:cor2mr1}
\cG^0 (\x) & = & \sum_{i_1,i_2,\dots,i_p=1}^n A_{i_1,i_2,\dots,i_p}\prod_{j=1}^{p}\x_{i_j}   + g\sqrt{p-1}  \nonumber   \\
\cG_u^0 (\x) & = &   \sqrt{p} \g^T\x .
  \end{eqnarray}
For $\x^{(a_1)}$ and $\x^{(a_2)} $ the following analogue of (\ref{eq:mr2}) can be written
  \begin{eqnarray}
\label{eq:cor2mr2}
\mE \cG^0 (\x^{(a_1)})\cG^0 (\x^{(a_2)} )   & =  & \lp  \sum_{i_j=1}^n   \x_{i_j}^{(a_1)}  \x_{i_j}^{(a_2)}  \rp^p + p-1
=  \lp \lp \x^{(a_1)} \rp^T\x^{(a_2)} \rp^p + p-1 \nonumber   \\
\mE \cG^0_u (\x^{(a_1)})\cG^0_u (\x^{(a_2)} )   & =  & p \sum_{i_j=1}^n \x_{i_j}^{(a_1)}\x_{i_j}^{(a_2)}
=
p
 \lp \x^{(a_1)} \rp^T\x^{(a_2)}.
  \end{eqnarray}
  One then notes
 \begin{eqnarray}
\label{eq:mr2a}
\mE \cG^0 (\x^{(a_1)})\cG^0 (\x^{(a_1)} )
=  \lp \lp \x^{(a_1)} \rp^T\x^{(a_1)} \rp^p  + p-1
= p = p
 \lp \x^{(a_1)} \rp^T\x^{(a_1)} =
\mE \cG^0_u (\x^{(a_1)})\cG^0_u (\x^{(a_1)} ),
  \end{eqnarray}
and, after setting $a^{0}\triangleq\lp \x^{(a_1)} \rp^T\x^{(a_2)} $, repeats line-by-line all the steps between (\ref{eq:mr3}) and  (\ref{eq:mr6}) with $a^{(j)}=a^{(0)},1\leq j\leq p$, to arrive at the following
\begin{eqnarray}
\label{eq:cor2mr6}
  \mE \max_{\x} e^{c_{3}\cG^0(\x)} &\leq &   \mE \max_{\x} e^{c_{3}\cG^0_u(\x)}.
\end{eqnarray}
A combination of (\ref{eq:cor2mr1}) and (\ref{eq:cor2mr6}) gives
\begin{eqnarray}
\label{eq:cor2mr7}
  \mE \max_{\bar{\x}} e^{c_{3}\lp \sum_{i_1,i_2,\dots,i_p=1}^n A_{i_1,i_2,\dots,i_p}\prod_{k=1}^{p}\x_{i_k}   + g\sqrt{p-1}   \rp  } &\leq &   \mE \max_{\bar{\x}} e^{c_{3} \sqrt{p} \g^T\x   },
  \end{eqnarray}
which is equivalent to
 \begin{eqnarray}
\label{eq:cor2mr9}
   \log \lp \mE e^{c_{3} \max_{\x}  \lp \sum_{i_1,i_2,\dots,i_p=1}^n A_{i_1,i_2,\dots,i_p}\prod_{k=1}^{p}\x_{i_k}    \rp  }
   \rp
   &\leq &
   -\frac{c_{3}^2}{2}(p-1) + \log \lp
    \mE e^{c_{3} \sqrt{p}  \max_{\x}  \g^T\x   }
    \rp,
\end{eqnarray}
and, after scaling by $c_3$, sufficient to complete the proof.

 \end{proof}

We below discuss two specializations of predominant interest in mathematics of spin glasses literature: (i) the spherical and (ii) the Ising spins.

\subsection{Spherical specialization}
\label{sec:ldpsph}

\begin{theorem}
  \label{thm:thm3}
 Assume the setup of Theorem \ref{thm:thm1} and Corollary \ref{cor:cor1}  with $\cS^{(j)} = \cS = \mS^n$. Set
 \begin{eqnarray}
 \label{eq:thm3eq0}
u_* & \triangleq  & 2 \sqrt{\frac{ p-1 }{p} }
\nonumber \\
\phi_{\mS^n}(p,u)
& \triangleq &
  \frac{1}{2} \log \lp p-1  \rp  -\frac{p-2}{4(p-1)}u^2
  - \frac{ \sqrt{ u^2 - u_*^2   }   }{u_*^2 } u  + \log\lp \frac{ u + \sqrt{u^2 -  u_*^2 } }{  u_*    }  \rp ,  u\geq u_*.
 \end{eqnarray}
 For $\zeta(\cdot)$ from  (\ref{eq:ldpeq1}) one then has
 \begin{eqnarray}
 \label{eq:thm3eq1}
 \lim_{n\rightarrow \infty }  \frac{1}{n} \log \lp \mP \lp \zeta(p;\mS^n,n) \geq u \rp \rp  \leq \phi_{\mS^n}(p,u) .
  \end{eqnarray}
\end{theorem}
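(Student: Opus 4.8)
The plan is to derive the large-deviation bound from the exponential-moment estimate already established in Corollary~\ref{cor:cor2} via a Chernoff/Markov argument, and then to evaluate the resulting rate function explicitly in the spherical case. First I would use the bound $\xi^0_l(p;\mS^n,n,c_3)\leq \xi^0_u(p;\mS^n,n,c_3)$ from (\ref{eq:cor2eq2}). Unwinding the definitions in (\ref{eq:cor2eq1}), this says
\begin{equation*}
\frac{1}{c_3}\log\lp \mE_A e^{c_3\sqrt{n}\,\zeta(p;\mS^n,n)}\rp
\leq -\frac{c_3}{2}(p-1)\sqrt{n}\cdot\frac{1}{\sqrt{n}}\cdot\sqrt{n}\;\; \cdots
\end{equation*}
more carefully, after canceling the common factor $\sqrt{p}/\sqrt{n}$, it reads
$\tfrac{1}{c_3}\log\mE_A e^{c_3\sqrt{n}\,\zeta}\leq -\tfrac{c_3}{2}(p-1)\sqrt{n}+\tfrac{1}{c_3}\log\mE_{\g}e^{c_3\sqrt{p}\max_{\x\in\mS^n}\g^T\x}$. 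Since $\max_{\x\in\mS^n}\g^T\x=\|\g\|_2$, the right-hand term is an expression involving the moment generating function of the norm of an $n$-dimensional standard Gaussian vector, which is classical: $\tfrac{1}{n}\log\mE e^{t\sqrt{n}\|\g\|_2/\sqrt{n}}$ has a known limit, and $\|\g\|_2\approx\sqrt{n}$ with Gaussian-type fluctuations governed by a rate function I will call $I_{\chi}$.

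Next I would apply Markov's inequality: for any $c_3>0$,
\begin{equation*}
\mP\lp \zeta(p;\mS^n,n)\geq u\rp
\leq e^{-c_3\sqrt{n}\,u}\,\mE_A e^{c_3\sqrt{n}\,\zeta(p;\mS^n,n)}
\leq \exp\lp -c_3\sqrt{n}\,u - \frac{c_3^2}{2}(p-1)n + \log\mE_{\g}e^{c_3\sqrt{p}\|\g\|_2}\rp,
\end{equation*}
where I have substituted the Corollary bound (the $\sqrt p$ bookkeeping absorbed into $c_3$). Taking $\tfrac1n\log$, letting $n\to\infty$ with $c_3=\beta\sqrt n$ for a free parameter $\beta>0$ (the natural scaling so that all three terms are $\Theta(n)$), and using $\tfrac1n\log\mE e^{\beta\sqrt{pn}\|\g\|_2}\to\Lambda(\beta\sqrt p)$ for the explicit Gaussian-norm log-MGF $\Lambda$, one gets
\begin{equation*}
\lim_{n\to\infty}\frac1n\log\mP(\zeta\geq u)\leq \inf_{\beta>0}\lp -\beta u-\frac{\beta^2}{2}(p-1)+\Lambda(\beta\sqrt p)\rp .
\end{equation*}
Then the remaining work is pure calculus: compute $\Lambda(s)=\lim_n\tfrac1n\log\mE e^{s\sqrt n\|\g\|_2/\sqrt n}$ — which by a standard Laplace/saddle-point evaluation of $\mE e^{s\sqrt n\cdot\frac{\|\g\|_2}{\sqrt n}}$ equals $\tfrac12(s\,r_*(s)-\log r_*(s)... )$ type expression, or more cleanly use that $\|\g\|_2/\sqrt n$ concentrates and $\Lambda(s)=\sup_{r>0}(sr - I_\chi(r))$ with $I_\chi(r)=\tfrac12(r^2-1-\log r^2)$ — carry out the $\beta$-minimization, and match the answer to $\phi_{\mS^n}(p,u)$ in (\ref{eq:thm3eq0}). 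The substitution $u_*=2\sqrt{(p-1)/p}$ should emerge as exactly the threshold below which the infimum is attained at $\beta=0$ (so the bound is vacuous, $\phi\le 0$ not claimed there) and above which an interior critical point $\beta(u)$ exists; the term $\sqrt{u^2-u_*^2}$ is the signature of solving the quadratic stationarity condition in $\beta$.

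The main obstacle I expect is the bookkeeping in the calculus step: correctly identifying the log-MGF $\Lambda$ of $\|\g\|_2/\sqrt n$ (getting the $\tfrac12\log$ and the constant right), performing the two-layer optimization (the sup defining $\Lambda$ and the inf over $\beta$) without sign errors, and then algebraically massaging the optimizer into the closed form with the $\log\!\lp (u+\sqrt{u^2-u_*^2})/u_*\rp$ and $-\tfrac{p-2}{4(p-1)}u^2$ terms exactly as written. A secondary subtlety is justifying the interchange of $\lim_{n\to\infty}$ with $\inf_{\beta}$ (i.e.\ that one may optimize $\beta$ after taking the limit rather than before) — but since we only need an upper bound on $\limsup_n\tfrac1n\log\mP$, it suffices to fix any single $\beta>0$, take the limit, and then optimize, so no genuine interchange is required; this is the standard one-sided Chernoff argument and presents no real difficulty. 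Everything upstream — the existence of the exponential moment bound — is already supplied by Corollary~\ref{cor:cor2}, so no new probabilistic input beyond the classical concentration of the Gaussian norm is needed.
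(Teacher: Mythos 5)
Your plan is correct and follows essentially the same route as the paper: Markov/Chernoff with the Corollary~\ref{cor:cor2} comparison at the scaling $c_3\rightarrow c_3\sqrt{n}$, explicit evaluation of the Gaussian-norm exponential moment, and a two-layer optimization that yields $\phi_{\mS^n}(p,u)$ with $u_*$ as the threshold. The only cosmetic difference is that you evaluate $\lim_n \frac{1}{n}\log \mE e^{c_3\sqrt{n}\sqrt{p}\|\g\|_2}$ via the chi-type rate function $I_\chi(r)=\frac{1}{2}(r^2-1-\log r^2)$, whereas the paper uses the equivalent variational form $\min_{\gamma>0}\lp \gamma c_3\sqrt{p}-\frac{1}{2}\log\lp 1-\frac{c_3\sqrt{p}}{2\gamma}\rp\rp$ cited from earlier work; the subsequent stationarity and algebraic reduction you defer to ``pure calculus'' is exactly what the paper carries out.
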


\begin{proof}
  By Chernoff/Markov inequality we have
 \begin{eqnarray}
 \label{eq:ldpeq2}
\frac{1}{n}\log \lp  \mP \lp \zeta(p;\mS^n,n) \geq u \rp \rp
 \leq  \frac{1}{n} \log \lp e^{c_3 \sqrt{n} \zeta(p;\mS^n,n) -c_3\sqrt{n} u} \rp
 =  \frac{1}{n} \log \lp e^{c_3 \sqrt{n} \zeta(p;\mS^n,n) } \rp -\frac{1}{n}c_3\sqrt{n} u .
  \end{eqnarray}
Combining (\ref{eq:ldpeq2})  with Corollary \ref{cor:cor2} and adopting scaling $c_3\rightarrow c_3\sqrt{n}$ we find
 \begin{eqnarray}
 \label{eq:ldpeq3}
\frac{1}{n}\log \lp  \mP \lp \zeta(p;\mS^n,n) \geq u \rp \rp
& \leq &
 -\frac{c_{3}^2}{2}(p-1)  +  \frac{1}{n}  \log\lp  \mE_{\g} e^{c_{3}\sqrt{n}\sqrt{p}\max_{\x\in\mS^n} \g^T\x } \rp
-c_3 u
\nonumber \\
& = &
  -\frac{c_{3}^2}{2}(p-1)  +  \frac{1}{n}  \log\lp  \mE_{\g} e^{c_{3}\sqrt{n}\sqrt{p} \| \g\|_2} \rp
-c_3 u.
  \end{eqnarray}
Utilizing results of \cite{Stojnicl1RegPosasym,StojnicMoreSophHopBnds10} we have
 \begin{eqnarray}
 \label{eq:ldpeq4}
 \lim_{n\rightarrow \infty} \frac{1}{n}  \log\lp  \mE_{\g} e^{c_{3}\sqrt{n}\sqrt{p} \| \g\|_2} \rp
=
\min_{\gamma > 0 } \phi_1(\gamma),
\end{eqnarray}
with
 \begin{eqnarray}
 \label{eq:ldpeq4a}
\phi_1(\gamma) = \gamma c_3\sqrt{p} -\frac{1}{2} \log\lp 1- \frac{c_3\sqrt{p}}{2\gamma}\rp.
  \end{eqnarray}
  Computing derivative and equaling it to zero one first obtains
 \begin{eqnarray}
 \label{eq:ldpeq4a}
\frac{d\phi_1(\gamma)}{d\gamma} = c_3\sqrt{p} - \frac{1}{ 2\gamma - c_3\sqrt{p}}  + \frac{1}{2\gamma}
=c_3\sqrt{p} \lp 1 - \frac{1}{ 2\gamma (2\gamma - c_3\sqrt{p} ) } \rp = 0,
  \end{eqnarray}
and then finds the optimal $\gamma$
 \begin{eqnarray}
 \label{eq:ldpeq5}
\hat{ \gamma } = \frac{c_3\sqrt{p}  +  \sqrt{c_3^2 p +4} }{4}.
  \end{eqnarray}
A combination of  (\ref{eq:ldpeq3})-(\ref{eq:ldpeq5}) gives
 \begin{eqnarray}
 \label{eq:ldpeq6}
\frac{1}{n}\log \lp  \mP \lp \zeta(p;\mS^n,n) \geq u \rp \rp
& \leq &
\phi_2(c_3),
\end{eqnarray}
with
 \begin{eqnarray}
 \label{eq:ldpeq7}
\phi_2(c_3)
=   -\frac{c_{3}^2}{2}(p-1)  +  \hat{ \gamma } c_3\sqrt{p} -\frac{1}{2} \log\lp 1- \frac{c_3\sqrt{p}}{2\hat{\gamma}}\rp
-c_3 u.
  \end{eqnarray}
As (\ref{eq:ldpeq7})  holds for any $c_3$, one can further optimize the right hand side over $c_3$. Computing again derivative and equaling it to zero gives
 \begin{eqnarray}
 \label{eq:ldpeq8}
\frac{d\phi_2(c_3)}{d c_3}
 =   -c_{3}(p-1)  +   \frac{d\phi_1(\hat{\gamma})}{d c_3}+   \frac{d\phi_1(\hat{\gamma})}{d \hat{\gamma}}\frac{d\hat{\gamma}}{d c_3}
- u
 =   -c_{3}(p-1)  +  \hat{\gamma}\sqrt{p} + \frac{\sqrt{p}}{2(2\hat{\gamma} -c_3\sqrt{p}  )}     - u = 0.
  \end{eqnarray}
 Keeping in mind that $\hat{\gamma}$ satisfies (\ref{eq:ldpeq4a}), we further have
 \begin{eqnarray}
 \label{eq:ldpeq9}
\frac{d\phi_2(c_3)}{d c_3}
  =   -c_{3}(p-1)  + 2 \hat{\gamma}\sqrt{p}   - u
  =
 -c_{3}(p-1)  + \frac{c_3 p  +  \sqrt{c_3^2 p^2 +4p} }{2}   - u = 0.
  \end{eqnarray}
After additional algebraic rearranging one arrives at
 \begin{eqnarray}
 \label{eq:ldpeq10}
c_{3}^2(1-p) + u(p-2)c_3  + u^2-p   =0,
  \end{eqnarray}
 which allows to obtain
 \begin{eqnarray}
 \label{eq:ldpeq11}
\hat{c}_{3}   = \frac{ -(p-2)u \pm \sqrt{ ((p-2)u)^2 -4(1-p)(u^2-p)   }   }{2(1-p)}
=\frac{ -(p-2)u \pm \sqrt{ (pu)^2 - 4(p-1)p   }   }{2(1-p)}
.
  \end{eqnarray}
Combining (\ref{eq:ldpeq4a}) and (\ref{eq:ldpeq7}), we obtain
 \begin{eqnarray}
 \label{eq:ldpeq12}
\phi_2(\hat{c}_3)
=   -\frac{c_{3}^2}{2}(p-1)  +  \hat{ \gamma } c_3\sqrt{p} +\frac{1}{2} \log\lp 4 \hat{\gamma}^2\rp
-c_3 u.
  \end{eqnarray}
A further combination of (\ref{eq:ldpeq9}) and (\ref{eq:ldpeq12})  gives
 \begin{eqnarray}
 \label{eq:ldpeq13}
\phi_2(\hat{c}_3)
=  -\frac{1}{2}\hat{c}_3 u + \log\lp 2 \hat{\gamma}\rp.
  \end{eqnarray}
From (\ref{eq:ldpeq9}) and (\ref{eq:ldpeq13}) we have
 \begin{eqnarray}
 \label{eq:ldpeq14}
\phi_2(\hat{c}_3)
=  -\frac{1}{2}\hat{c}_3 u +  \frac{1}{2} \log\lp \lp \hat{c}_3(p-1)+u   \rp \rp  -  \frac{1}{4} \log (p) .
  \end{eqnarray}
After plugging $\hat{c}_3$ from  (\ref{eq:ldpeq11}) into (\ref{eq:ldpeq14}) one obtains
 \begin{eqnarray}
 \label{eq:ldpeq15}
\phi_2(\hat{c}_3)
& = &  -\frac{1}{2}  \frac{ -(p-2)u \pm \sqrt{ (pu)^2 - 4(p-1)p   }   }{2(1-p)} u + \log\lp \frac{ up \mp \sqrt{(up)^2 -4(p-1)p  } }{2\sqrt{p}}  \rp
\nonumber \\
& = &
  -\frac{p-2}{4(p-1)}u^2
  \pm \frac{ \sqrt{ u^2 - \frac{4(p-1)}{p}   }   }{4\frac{p-1}{p}} u  +  \log\lp  \frac {  u \mp \sqrt{  u^2 -\frac{4(p-1)}{p}  } }  { \sqrt{\frac{4}{p} }    }  \rp
\nonumber \\
 & = &
  -\frac{p-2}{4(p-1)}u^2
  \pm \frac{ \sqrt{ u^2 - \frac{4(p-1)}{p}   }   }{4\frac{p-1}{p}} u  + \log\lp \frac{ u \mp \sqrt{u^2 - \frac{4(p-1)}{p}  } }{\sqrt{\frac{4(p-1)}{p} } }  \rp  + \frac{1}{2} \log \lp p-1  \rp    .
  \end{eqnarray}
Recalling on the definition of $u_{*}$
\begin{eqnarray}
 \label{eq:ldpeq16}
u_* = 2 \sqrt{\frac{(p-1)}{p} } ,
  \end{eqnarray}
and choosing minus sign one arrives at
 \begin{eqnarray}
 \label{eq:ldpeq17}
\phi_2(\hat{c}_3)
  & = &
 \frac{1}{2} \log \lp p-1  \rp  -\frac{p-2}{4(p-1)}u^2
  - \frac{ \sqrt{ u^2 - u_*^2   }   }{u_*^2 } u  + \log\lp \frac{ u + \sqrt{u^2 -  u_*^2 } }{  u_*    }  \rp = \phi_{\mS^n}(p,u),
  \end{eqnarray}
  which is the theorem's claim.
\end{proof}

One now notes that the above theorem is very closely related to  \cite{Auff13} and consequently to \cite{Auff13a,Subag17,Subag17a} as well. The $n$-scaled logarithm of the expected number of the critical points of the pure $p$-spin spherical model was considered in  \cite{Auff13} (see also \cite{SubZeit17,SubZeit21} for related distributional and concentration considerations and \cite{McKenna24,Genovese22,AuffChen14,Kivimae23} for corresponding bipartite ones; relevant statistical physics considerations can be found in, e.g., \cite{Fyod04}).  The LDP good rate function $\phi(p,u)$ discussed in the above theorem precisely matches the critical points exponent obtained in \cite{Auff13}. Moreover, the value $u_*$ above which the presented formalism is particularly relevant precisely matches the limiting energy value obtained in \cite{Auff13} above which there is still a constant number of critical points. As in \cite{Auff13}, one can define
 \begin{eqnarray}
 \label{eq:ldpeq18}
u^{(sph)}_{GS}   \triangleq
\min & &  u \nonumber \\
\mbox{such that} & & \phi_{\mS^n}(p,u) <0,
  \end{eqnarray}
  or equivalently in our context (basically utilizing (\ref{eq:ldpeq7}))
 \begin{eqnarray}
 \label{eq:ldpeq18a}
u^{(sph)}_{GS}   \triangleq
\min_{c_3>0} \lp -\frac{c_{3}}{2}(p-1)  +  \hat{ \gamma } \sqrt{p} -\frac{1}{2c_3} \log\lp 1- \frac{c_3\sqrt{p}}{2\hat{\gamma}}\rp \rp.
    \end{eqnarray}
Then the ground state energy (GSE) of the spherical pure $p$-spin model is given as
 \begin{eqnarray}
 \label{eq:ldpeq18a}
\xi^0_{sph}(p) = u^{(sph)}_{GS},
   \end{eqnarray}
   and  the corresponding GSE of the spherical multipartite $p$-spin model as
 \begin{eqnarray}
 \label{eq:ldpeq19}
\xi_{sph}(p) = \sqrt{p}u^{(sph)}_{GS}.
   \end{eqnarray}

\vspace{.2in}

\noindent \underline{\emph{\textbf{Analytical agreement of results from \cite{Subagtap23b} and \cite{DarMc24}}}}

In \cite{Subagtap23b}, Subag obtain characterization of spherical multipartite $p$-spin models GSEs provided that ceratin energy mixtures converge. Using different methods, in \cite{DarMc24}, Dartois and McKenna obtained the upper bounds for the same GSEs and observed that they numerically match Subag's results. Our proving methods allow to explicitly establish full analytical agrement between these two sets of results.

One first notes that Lemma 2.7 in \cite{DarMc24} gives that the GSE upper bound, $E_0$, satisfies (we consider the nontrivial case $E_0\sqrt{\frac{p}{p-1}} \geq 2$)
\begin{equation}
\label{eq:agreq1}
 \frac{1+\log(p-1)}{2} + \frac{E^2_0\frac{p}{p-1}}{4} -\frac{1}{2}  - \frac{|E_0|\sqrt{\frac{p}{p-1}}}{4} \sqrt{E_0^2\frac{p}{p-1} - 4 }
 + \log \lp \sqrt{\frac{E_0^2\frac{p}{p-1}}{4} -1 }  + \frac{|E_0|\sqrt{\frac{p}{p-1}}}{2}  \rp   - \frac{E_0^2}{2}
   = 0.
 \end{equation}
The above is then equivalent to
\begin{equation}
\label{eq:agreq2}
 \frac{\log(p-1)}{2} - \frac{E^2_0\frac{p-2}{p-1}}{4}   - \frac{|E_0|}{4\frac{p-1}{p}} \sqrt{E_0^2 - 4\frac{p-1}{p} }
 + \log \lp \frac{\sqrt{E_0^2 - \frac{4(p-1)}{p} }  + |E_0| }{\sqrt{ \frac{4(p-1)}{p} } }  \rp
   = 0.
 \end{equation}
A combination of (\ref{eq:ldpeq17}), (\ref{eq:ldpeq18}), and (\ref{eq:agreq2}) together with a change of variables  $ E_0 \leftrightarrow u^{(sph)}_{(GS)}$ shows that GSE upper bound from \cite{DarMc24} indeed matches the exact value that we obtained above.

For Subag's part we have from Theorem 1 in \cite{Subagtap23b} that the ground state energy $E_*$ is obtained as
\begin{equation}
\label{eq:agreq3}
E_*    = \sqrt{ \lp -\log\lp 1 -q_s  \rp  \lp 1+p\lp\frac{1}{q_s}-1\rp \rp  \rp   },
 \end{equation}
with $q_s$ satisfying
\begin{equation}
\label{eq:agreq4}
 \frac{q_s^2}{p(1-q_s)}  =  \frac{-\log \lp 1 -q_s \rp }{ 1+p\lp\frac{1}{q_s}-1\rp  }.
 \end{equation}
Setting $z=\frac{1}{q_s}-1$ we then have
\begin{equation}
\label{eq:agreq5}
 \frac{1}{pz(1+z)}  =  \frac{E_*^2}{(1+pz)^2},
 \end{equation}
which is equivalent to
\begin{equation}
\label{eq:agreq6}
z^2(E^2_*p-p^2) + z (pE^2_* -2p) -1 =z^2(E^2_*-p)p + z (E^2_* -2)p -1 =0.
 \end{equation}
Solving over $z$ gives
\begin{equation}
\label{eq:agreq7}
\hat{z} = \frac{-(E^2_* -2)p  \pm \sqrt{((E^2_* -2)p )^2 +4 (E^2_*-p)p } }{2p(E^2_*-p)}
= \frac{-(E^2_* -2)  \pm  E_*  \sqrt{E^2_* - \frac{4(p-1)}{p}    }   }{2(E^2_*-p)},
 \end{equation}
 and consequently
\begin{equation}
\label{eq:agreq8}
\hat{q}_s=\frac{1}{1+\hat{z}}.
 \end{equation}
 From (\ref{eq:agreq3})  and (\ref{eq:agreq5}) one then obtains
\begin{equation}
\label{eq:agreq9}
  \frac{1+p\hat{z}}{p\hat{z} (1+\hat{z})} = \frac { E_*^2 } { \lp 1+p\hat{z} \rp } = -\log (1-\hat{q}_s) = -\log \lp  \frac{\hat{z}}{1+\hat{z}} \rp.
 \end{equation}
If we can show that for $u=u_{GS}^{(sph)}=E_*$ the following two equalities hold
\begin{eqnarray}
\label{eq:agreq10}
  \frac{1}{4\hat{\gamma}^2}  & = &  \frac{\hat{z}}{1+\hat{z}} \\
    -\hat{c}_3 u  & =  &  -\frac{1+p\hat{z}}{p\hat{z} (1+\hat{z})} ,\label{eq:agreq10a}
  \end{eqnarray}
then Subag's results are in agreement with our own and then automatically with \cite{DarMc24} as well.

\vspace{.2in}

\noindent\underline{\emph{Proof of (\ref{eq:agreq10}):}} From (\ref{eq:ldpeq11}) we first observe
 \begin{eqnarray}
 \label{eq:agreq11}
\frac{-2(p-1)\hat{c}_{3}-2u}{p}   =  -u - \sqrt{ u^2 - \frac{4(p-1)}{p}   }
=   \frac{\frac{4(p-1)}{p}  }{-u + \sqrt{ u^2 - \frac{4(p-1)}{p}   } } ,
  \end{eqnarray}
and then obtain
 \begin{eqnarray}
 \label{eq:agreq12}
\frac{1}{\frac{-2(p-1)\hat{c}_{3}-2u}{4(p-1)} }  =  -u + \sqrt{ u^2 - \frac{4(p-1)}{p}   } .
  \end{eqnarray}
Combining (\ref{eq:ldpeq9})  and (\ref{eq:agreq12}) we further have
 \begin{eqnarray}
 \label{eq:agreq12a}
\frac{p-1}{\sqrt{p}\hat{\gamma}}  =  -u + \sqrt{ u^2 - \frac{4(p-1)}{p}   } .
  \end{eqnarray}
Taking the plus sign in (\ref{eq:agreq7}) one finds
\begin{equation}
\label{eq:agreq13}
\frac{2(E^2_*-p) \hat{z} - 2}{E_*} =  -E_*   +  \sqrt{E^2_* - \frac{4(p-1)}{p}    }   ,
 \end{equation}
which for $E_*=u$ gives
\begin{equation}
\label{eq:agreq14}
\frac{2(u^2-p) \hat{z} - 2}{u} =  -u   +  \sqrt{u^2 - \frac{4(p-1)}{p}    }   .
 \end{equation}
Combining (\ref{eq:agreq12a}) and (\ref{eq:agreq14}) and rearranging further we have
\begin{eqnarray}
\begin{array}{lrclcl}
  & \frac{(p-1)}{\sqrt{p}\hat{\gamma}}    & = &  \frac{2(u^2-p) \hat{z} - 2}{u}  \\
\Longleftrightarrow \quad $ $ & \frac{(p-1)^2}{p\hat{\gamma}^2}  & = & \lp \frac{2(u^2-p) \hat{z} - 2}{u}  \rp^2  \\
\Longleftrightarrow \quad $ $ & \frac{(p-1)^2}{p\hat{\gamma}^2}  & = & 4(\hat{z}^2(u^2-p)^2-2\hat{z}(u^2-p) +1 )/u^2  \\
\Longleftrightarrow \quad $ $ & \frac{(p-1)^2}{p\hat{\gamma}^2}  & = & 4((1/p-\hat{z}(u^2-2))(u^2-p)-2\hat{z}(u^2-p) +1 )/u^2  \\
\Longleftrightarrow \quad $ $ & \frac{(p-1)^2}{p\hat{\gamma}^2}  & = & 4((1/p-\hat{z}u^2)(u^2-p) +1 )/u^2 \\
\Longleftrightarrow \quad $ $ & \frac{(p-1)^2}{p\hat{\gamma}^2}  & = & 4(-\hat{z}(u^2-p) +1/p)  \\
\Longleftrightarrow \quad $ $ & \frac{(p-1)^2}{4\hat{\gamma}^2}  & = & (-\hat{z}(u^2-p)p +1)  \\
\Longleftrightarrow \quad $ $ & \frac{1}{4\hat{\gamma}^2}  & = & (-\hat{z}(u^2-p)p +1)/(p-1)^2 ,
\end{array}\label{eq:agreq15}
\end{eqnarray}
where the fourth equality follows from (\ref{eq:agreq6}). We also observe the following sequence of equalities
\begin{eqnarray}
\begin{array}{lrclcl}
  & 0  & = &    ( -\hat{z}^2(u^2-p)p -\hat{z}(u^2-2)p +1)  \\
\Longleftrightarrow \quad $ $ &  \hat{z}(p^2-2p)  & = &  (-\hat{z}^2(u^2-p)p -\hat{z}(u^2-p)p +1)  \\
\Longleftrightarrow \quad $ $ &  \hat{z}(p-1)^2   & = &   (-\hat{z}^2(u^2-p)p -\hat{z}(u^2-p)p +\hat{z}+1)  \\
\Longleftrightarrow \quad $ $ &  \hat{z}(p-1)^2   & = &   (\hat{z}+1)(-\hat{z}(u^2-p)p +1) \\
\Longleftrightarrow \quad $ $ &   \hat{z}/(\hat{z}+1)     & = &    (-\hat{z}(u^2-p)p +1)/(p-1)^2,
\end{array}\label{eq:agreq16}
\end{eqnarray}
where the first equality follows from (\ref{eq:agreq6}). Connecting last equalities in (\ref{eq:agreq15}) and (\ref{eq:agreq16}) proves (\ref{eq:agreq10}).

\vspace{.2in}

\noindent\underline{\emph{Proof of (\ref{eq:agreq10a}):}} Starting with a combination of (\ref{eq:agreq12}) and (\ref{eq:agreq14})
 we find
 \begin{eqnarray}
\begin{array}{lrclcl}
  &  1/((-2 \hat{c}_3 (p-1)-2u)/4/(p-1))    & = &      (2\hat{z}(u^2-p)-2)/u  \\
\Longleftrightarrow \quad $ $ &  1/((-2\hat{c}_3 (p-1)-2u))   & = &   (2\hat{z}(u^2-p)-2)/u/4/(p-1) \\
\Longleftrightarrow \quad $ $ & ((-2 \hat{c}_3 (p-1)))     & = &   2u+1/((2\hat{z}(u^2-p)-2)/u/4/(p-1))  \\
\Longleftrightarrow \quad $ $ & -\hat{c}_3   & = &     u/(p-1)+1/((2\hat{z}(u^2-p)-2)/u/2)   \\
\Longleftrightarrow \quad $ $ &   -\hat{c}_3 u    & = &   u^2/(p-1)+2u^2/((2\hat{z}(u^2-p)-2)) .
\end{array}\label{eq:agreq17}
  \end{eqnarray}
We then have the following sequence of equalities as well
 \begin{eqnarray}
\begin{array}{lrclcl}
  &   -(-\hat{z}(u^2)(p^2-2p+1 )  )      & = &    \hat{z}u^2 (1-2p+p^2)  \\
\Longleftrightarrow \quad $ $ &  -(-\hat{z}(u^2)(p^2-p) + \hat{z}(u^2)(p-1) )   & = &  \hat{z} u^2 (1-2p+p^2) \\
\Longleftrightarrow \quad $ $ &  -(-\hat{z}(u^2-2)(p^2-p) + \hat{z}(u^2-2p)(p-1) )      & = &  \hat{z}u^2 (1- 2p +p^2)  \\
\Longleftrightarrow \quad $ $ &   -(-z\hat{}(u^2-2)(p^2-p) + \hat{z}(u^2-2p)(p-1) )    & = &  \hat{z} u^2- u^2(-\hat{z}p(-2))-u^2 p \hat{z}(-p)  \\
\Longleftrightarrow \quad $ $ &   -(-z\hat{}(u^2-2)(p^2-p) + \hat{z}(u^2-2p)(p-1) )    & = &  \hat{z}u^2- u^2(-\hat{z}p(u^2-2))-u^2p\hat{z}(u^2-p) .
 \end{array}\nonumber \\ \label{eq:agreq18}
  \end{eqnarray}
Focusing on the left hand side of the last equality  we further find
 \begin{eqnarray}
 L_s & \triangleq &
  -(-z\hat{}(u^2-2)(p^2-p) + \hat{z}(u^2-2p)(p-1) )   \nonumber \\
  & = &    -((1/p-\hat{z}(u^2-2))(p^2-p) + \hat{z}(u^2-2p)(p-1) -(p-1))
  \nonumber \\
  & = &   -((\hat{z}^2(p^2-p)(u^2-p) + \hat{z}(u^2-2p)(p-1) -(p-1)))
  \nonumber \\
  & = &    -((\hat{z}^2(p^2-p)(u^2-p)-(p^2-p)\hat{z} + \hat{z}(u^2-p)(p-1) -(p-1)))
  \nonumber \\
  & = &   -(p-1+(p^2-p)\hat{z})((\hat{z}(u^2-p)-1))
  \nonumber \\
  & = &   -(1+p\hat{z})(p-1)((\hat{z}(u^2-p)-1)) .
 \label{eq:agreq19}
  \end{eqnarray}
For the right hand side of the last equality in (\ref{eq:agreq18}) we have
 \begin{eqnarray}
R_s & \triangleq &
\hat{z}u^2- u^2(-\hat{z}p(u^2-2))-u^2p\hat{z}(u^2-p)   \nonumber \\
  & = &   (\hat{z}+1)u^2- u^2(1-\hat{z}p(u^2-2))-u^2p\hat{z}(u^2-p)
  \nonumber \\
  & = &  (\hat{z}+1)u^2- u^2 p\hat{z}^2(u^2-p)-u^2 p\hat{z}(u^2-p)
  \nonumber \\
  & = &  (z\hat{}+1)u^2-(\hat{z}+1) u^2 p\hat{z}(u^2-p)
  \nonumber \\
  & = &   (\hat{z}+1)u^2(1-p\hat{z}(u^2-p))
  \nonumber \\
  & = &   p(\hat{z}+1)u^2 (1/p-\hat{z}(u^2-p))
  \nonumber \\
  & = &  p(\hat{z}+1)u^2(1/p-\hat{z}(u^2-2)+\hat{z}(p-2))
  \nonumber \\
  & = &  p(\hat{z}+1)u^2(\hat{z}^2(u^2-p)+\hat{z}(p-2))
  \nonumber \\
  & = &  p\hat{z}(\hat{z}+1)u^2(\hat{z}(u^2-p)+p-2).
 \label{eq:agreq20}
  \end{eqnarray}
Combining (\ref{eq:agreq19}) and (\ref{eq:agreq20}) with the last equality of (\ref{eq:agreq18}) we further have
 \begin{eqnarray}
\begin{array}{lrclcl}
  & \hspace{-.3in}  -(-z\hat{}(u^2-2)(p^2-p) + \hat{z}(u^2-2p)(p-1) )    & = &  \hat{z}u^2- u^2(-\hat{z}p(u^2-2))-u^2p\hat{z}(u^2-p)   \\
\Longleftrightarrow \quad $ $ &  L_s  & = & R_s\\
\Longleftrightarrow \quad $ $ &   -(1+p\hat{z})(p-1)((\hat{z}(u^2-p)-1))    & = & p\hat{z}(\hat{z}+1)u^2(\hat{z}(u^2-p)+p-2) \\
\Longleftrightarrow \quad $ $ &   -(1+p\hat{z})/p/\hat{z}(p-1)((\hat{z}(u^2-p)-1))  & = &    (\hat{z}+1)u^2(\hat{z}(u^2-p)+p-2)  \\
\Longleftrightarrow \quad $ $ &   -(1+p\hat{z})/p/\hat{z}(p-1)((\hat{z}(u^2-p)-1))  & = &    (\hat{z}+1)u^2((\hat{z}(u^2-p)-1)+p-1)  \\
\Longleftrightarrow \quad $ $ &  -(1+p\hat{z})/p/\hat{z}   & = & \hspace{-.02in} (\hat{z}+1)u^2/(p-1)+(\hat{z}+1)u^2/((\hat{z}(u^2-p)-1))  \\
\Longleftrightarrow \quad $ $ &   -(1+p\hat{z})/p/\hat{z}/(1+\hat{z})   & = &    u^2/(p-1)+u^2/((\hat{z}(u^2-p)-1))\\
\Longleftrightarrow \quad $ $ &   -(1+p\hat{z})/p/\hat{z}/(1+\hat{z})   & = &  u^2/(p-1)+2u^2/((2\hat{z}(u^2-p)-2)).
  \end{array}\nonumber \\ \label{eq:agreq21}
  \end{eqnarray}
Connecting beginning and end in (\ref{eq:agreq18}) with the beginning and end in (\ref{eq:agreq21}) we obtain
 \begin{eqnarray}
\begin{array}{lrclcl}
  &   -(-\hat{z}(u^2)(p^2-2p+1 )  )      & = &    \hat{z}u^2 (1-2p+p^2)  \\
 \Longleftrightarrow \quad $ $ &   -(-z\hat{}(u^2-2)(p^2-p) + \hat{z}(u^2-2p)(p-1) )    & = &  \hat{z}u^2- u^2(-\hat{z}p(u^2-2))-u^2p\hat{z}(u^2-p) \\
\Longleftrightarrow \quad $ $ &   -(1+p\hat{z})/p/\hat{z}/(1+\hat{z})   & = &  u^2/(p-1)+2u^2/((2\hat{z}(u^2-p)-2)). \end{array} \label{eq:agreq22}
  \end{eqnarray}
A combination of the last equality in (\ref{eq:agreq17}) and the last equality in (\ref{eq:agreq22}) completes the proof of (\ref{eq:agreq10a}).

\vspace{.3in}

\noindent \underline{\emph{\textbf{Concrete results  based on Theorem \ref{thm:thm3}}}}

Results obtained after numerical evaluations are shown in Figure \ref{fig:fig1}. For several smallest $p$ the concrete GSE values are also shown in Table \ref{tab:tab1} (they are denoted by $\xi^{(2,p)}_{sph}(p) $ and they match (after $\sqrt{p}$ scaling) the numerical values given for upper bounds in \cite{DarMc24}). In parallel, we show the results that the above mechanism produces if $c_3\rightarrow 0$
(they are denoted by $\xi^{(1)}_{sph}(p) $ and they match the replica symmetric predictions). Moreover, both these values also correspond to what is called first and second partial level of lifting within the fully lifted (fl) RDT \cite{Stojnicflrdt23,Stojnicnflgscompyx23}.

\begin{figure}[h]
\centering
\centerline{\includegraphics[width=1.00\linewidth]{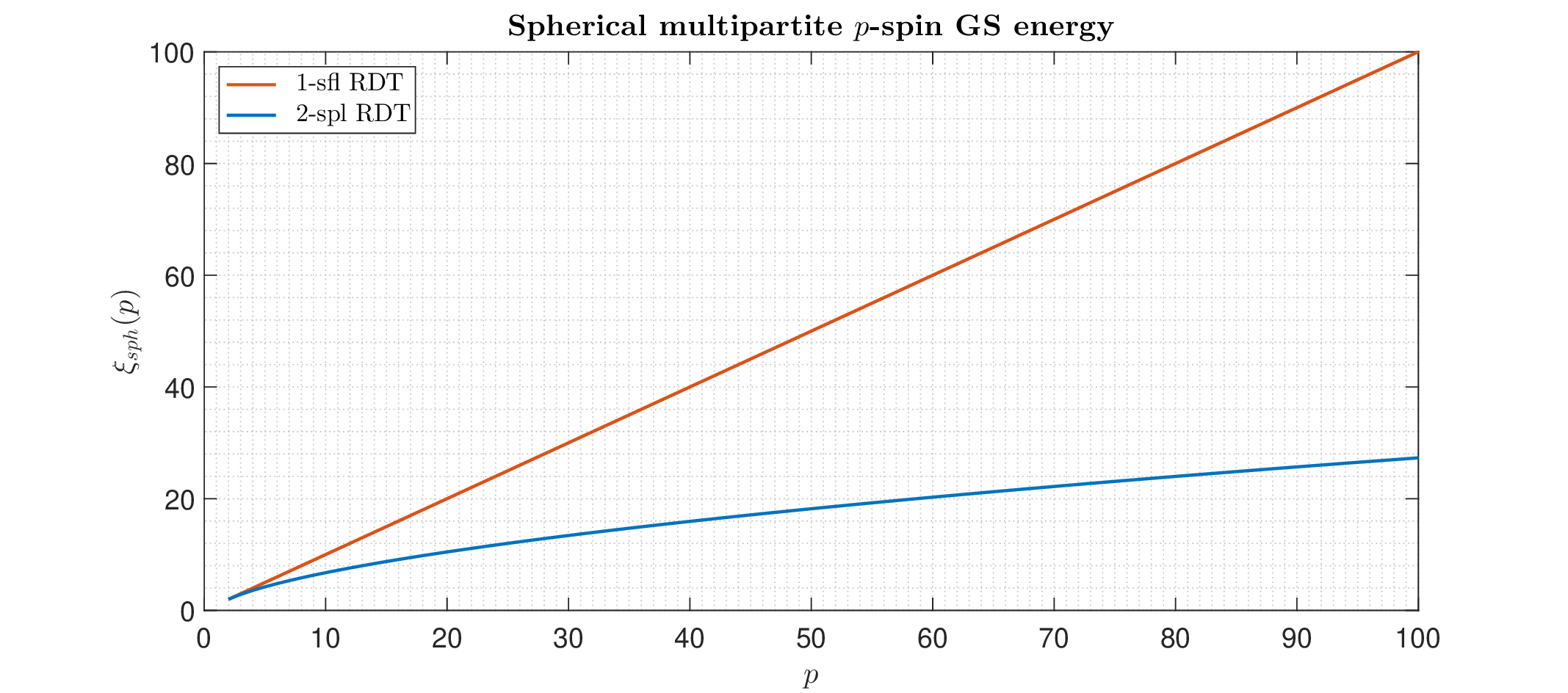}}
\caption{$\xi_{sph}(p)$ as a function of $p$}
\label{fig:fig1}
\end{figure}

\begin{table}[h]
\caption{Spherical multipartite $p$-spin  GSE -- concrete values  for different $p$ }\vspace{.1in}
\centering
\def\arraystretch{1.2}
\begin{tabular}{||l||c||c||c||c||c||c|| }\hline\hline
 \hspace{-0in}$p$                                             & $2$    & $3$ & $4$ &  $5$ & $6$ &  $7$   \\ \hline\hline
$\xi_{sph}^{(1)} (p )$                                         &  \bl{$\mathbf{2.0000}$} &  \bl{$\mathbf{3.0000}$} &  \bl{$\mathbf{4.0000}$} & \bl{$\mathbf{5.0000}$}  &  \bl{$\mathbf{6.0000}$} &  \bl{$\mathbf{7.0000}$} \\ \hline\hline
$\xi_{sph}^{(2,p)} (p )$                                         & \bl{$\mathbf{2.0000}$}  & \bl{$\mathbf{2.8700}$}  & \bl{$\mathbf{3.5882}$} &
\bl{$\mathbf{4.2217}$}  & \bl{$\mathbf{4.7977}$} & \bl{$\mathbf{5.3311}$}  \\ \hline\hline
\end{tabular}
\label{tab:tab1}
\end{table}

\subsection{Ising specialization}
\label{sec:ldpising}

\begin{theorem}
  \label{thm:thm4}
 Assume the setup of Theorem \ref{thm:thm1} and Corollary \ref{cor:cor1}  with $\cS^{(j)} = \cS = \mB^n$. Let $\hat{c}_3$ be such that
 \begin{eqnarray}
 \label{eq:thmeq00a}
\hat{c}_3 = \mbox{argmin}_{c_3} \lp  \frac{c^2_{3}}{2}  + \log  \lp  \erfc  \lp -\frac{c_3\sqrt{p}}{\sqrt{2}} \rp -c_3 u \rp
 \rp.
\end{eqnarray}
Then 
 \begin{eqnarray}
 \label{eq:thmeq00}
  \frac{1}{ \erfc  \lp -\frac{\hat{c}_3\sqrt{p}}{\sqrt{2}} \rp }  = e^{\frac{\hat{c}_3^2p}{2}}( u - \hat{c}_3 )\sqrt{\frac{\pi}{2p}},
  \end{eqnarray}
and with
 \begin{eqnarray}
 \label{eq:thm4eq0}
 \phi_{\mB^n}(p,u)
   \triangleq
 - \frac{\hat{c}_{3}^2}{2}(p-1)  -   \log  \lp u-\hat{c}_3 \rp
- \hat{c}_3 u -\frac{1}{2}\log\lp \frac{\pi}{2p}\rp
=
  \frac{\hat{c}_{3}^2}{2}  +  \log  \lp  \erfc  \lp -\frac{\hat{c}_3\sqrt{p}}{\sqrt{2}} \rp  \rp -\hat{c}_3 u,
 \end{eqnarray}
one has for $\zeta(\cdot)$ from  (\ref{eq:ldpeq1}) 
 \begin{eqnarray}
 \label{eq:thm4eq1}
 \lim_{n\rightarrow \infty }  \frac{1}{n} \log \lp \mP \lp \zeta(p;\mB^n,n) \geq u \rp \rp  \leq \phi_{\mB^n}(p,u) .
  \end{eqnarray}
\end{theorem}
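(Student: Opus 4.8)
The plan is to mirror the proof of Theorem \ref{thm:thm3}, replacing the sphere $\mS^n$ by the binary cube $\mB^n$ and recomputing the single-partite contribution. First I would apply the Chernoff/Markov inequality exactly as in (\ref{eq:ldpeq2}), and then upper bound $\mE_A e^{c_3\sqrt{n}\zeta(p;\mB^n,n)}$ by invoking Corollary \ref{cor:cor2} specialized to $\cS=\mB^n$, adopting (just as in Theorem \ref{thm:thm3}) the rescaling $c_3\rightarrow c_3\sqrt{n}$. After cancelling the common $\frac{\sqrt{p}}{\sqrt{n}c_3}$ factors in $\xi_l^0\leq\xi_u^0$, this reduces the whole estimate to controlling the single-partite Gaussian integral $\mE_{\g}\, e^{c_3\sqrt{n}\sqrt{p}\,\max_{\x\in\mB^n}\g^T\x}$.

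The key computation is that $\max_{\x\in\mB^n}\g^T\x=\frac{1}{\sqrt{n}}\|\g\|_1=\frac{1}{\sqrt{n}}\sum_{i=1}^{n}|\g_i|$, so by independence of the coordinates the integral factorizes as $\lp\mE\, e^{c_3\sqrt{p}|g|}\rp^n$ with $g\sim\cN(0,1)$; completing the square in the Gaussian integral gives $\mE\, e^{c_3\sqrt{p}|g|}=e^{c_3^2p/2}\,\erfc\lp-\tfrac{c_3\sqrt{p}}{\sqrt{2}}\rp$, hence $\frac{1}{n}\log\mE_{\g}\, e^{c_3\sqrt{n}\sqrt{p}\max_{\x\in\mB^n}\g^T\x}=\frac{c_3^2p}{2}+\log\erfc\lp-\tfrac{c_3\sqrt{p}}{\sqrt{2}}\rp$. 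Feeding this back yields, for every $c_3>0$,
\[
\frac{1}{n}\log\lp\mP\lp\zeta(p;\mB^n,n)\geq u\rp\rp \;\leq\; -\frac{c_3^2}{2}(p-1)+\frac{c_3^2p}{2}+\log\erfc\lp-\tfrac{c_3\sqrt{p}}{\sqrt{2}}\rp-c_3u ,
\]
whose right-hand side simplifies to $\frac{c_3^2}{2}+\log\erfc\lp-\tfrac{c_3\sqrt{p}}{\sqrt{2}}\rp-c_3u$ and is independent of $n$. Letting $n\to\infty$ and then taking the infimum over $c_3>0$ establishes (\ref{eq:thm4eq1}) with $\phi_{\mB^n}(p,u)$ equal to the last expression in (\ref{eq:thm4eq0}) evaluated at the minimizer.

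It remains to identify the minimizer $\hat{c}_3$ of (\ref{eq:thmeq00a}) and to recast $\phi_{\mB^n}$ in its first form. Differentiating $c_3\mapsto\frac{c_3^2}{2}+\log\erfc\lp-\tfrac{c_3\sqrt{p}}{\sqrt{2}}\rp-c_3u$ and using $\frac{d}{dc_3}\erfc\lp-\tfrac{c_3\sqrt{p}}{\sqrt{2}}\rp=\sqrt{\tfrac{2p}{\pi}}\,e^{-c_3^2p/2}$, the stationarity equation at $\hat{c}_3$ reads $\hat{c}_3+\frac{\sqrt{2p/\pi}\,e^{-\hat{c}_3^2p/2}}{\erfc(-\hat{c}_3\sqrt{p}/\sqrt{2})}-u=0$, which rearranges to exactly (\ref{eq:thmeq00}). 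Substituting $\log\erfc(-\hat{c}_3\sqrt{p}/\sqrt{2})=-\log(u-\hat{c}_3)-\tfrac{\hat{c}_3^2p}{2}-\tfrac12\log\tfrac{\pi}{2p}$, read off from (\ref{eq:thmeq00}), into the last form of $\phi_{\mB^n}(p,u)$ reproduces the first form, completing the argument. The main obstacle, relative to the spherical case, is that (\ref{eq:thmeq00}) is transcendental, so $\hat{c}_3$ has no closed form; one should therefore verify that for $u$ in the relevant range (roughly $u>\sqrt{2p/\pi}$, where the objective has negative slope at $c_3=0^{+}$ and slope tending to $+\infty$) a positive minimizer exists and selects the branch intended in (\ref{eq:thmeq00}). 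Everything else is the same Gaussian-process comparison packaged in Corollary \ref{cor:cor2} together with routine one-variable calculus.
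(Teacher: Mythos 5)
Your proposal is correct and follows essentially the same route as the paper: Chernoff/Markov plus Corollary \ref{cor:cor2} with the $c_3\rightarrow c_3\sqrt{n}$ rescaling, the reduction of the Gaussian integral to $\mE\, e^{c_3\sqrt{p}|g|}=e^{c_3^2p/2}\erfc\lp-\tfrac{c_3\sqrt{p}}{\sqrt{2}}\rp$ via $\max_{\x\in\mB^n}\g^T\x=\tfrac{1}{\sqrt{n}}\|\g\|_1$, and the same stationarity computation yielding (\ref{eq:thmeq00}) and the two equivalent forms of $\phi_{\mB^n}$. Your closing remark on existence of a positive minimizer of the transcendental stationarity equation is a reasonable extra check that the paper does not spell out, but it does not change the argument.
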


\begin{proof}
As in the proof or Theorem \ref{thm:thm3},  Chernoff/Markov inequality gives
 \begin{eqnarray}
 \label{eq:ldpeq20}
\frac{1}{n}\log \lp  \mP \lp \zeta(p;\mB^n,n) \geq u \rp \rp
 \leq  \frac{1}{n} \log \lp e^{c_3 \sqrt{n} \zeta(p;\mB^n,n) -c_3\sqrt{n} u} \rp
 =  \frac{1}{n} \log \lp e^{c_3 \sqrt{n} \zeta(p;\mB^n,n) } \rp -\frac{1}{n}c_3\sqrt{n} u .
  \end{eqnarray}
Another combination of (\ref{eq:ldpeq2})  with Corollary \ref{cor:cor2} together with $c_3\rightarrow c_3\sqrt{n}$ scaling allows to write the following Ising analogue to (\ref{eq:ldpeq3})
 \begin{eqnarray}
 \label{eq:ldpeq21}
\frac{1}{n}\log \lp  \mP \lp \zeta(p;\mB^n,n) \geq u \rp \rp
& \leq &
 -\frac{c_{3}^2}{2}(p-1)  +  \frac{1}{n}  \log\lp  \mE_{\g} e^{c_{3}\sqrt{n}\sqrt{p}\max_{\x\in\mB^n} \g^T\x } \rp
-c_3 u
\nonumber \\
& = &
  -\frac{c_{3}^2}{2}(p-1)  +   \log\lp  \mE_{\g} e^{c_{3}\sqrt{p} |\g_1|  } \rp
-c_3 u\nonumber \\
& = &
  \frac{c_{3}^2}{2}  +   \log  \lp  \erfc  \lp -\frac{c_3\sqrt{p}}{\sqrt{2}} \rp \rp
-c_3 u
\nonumber \\
& = & \phi_3(c_3).
  \end{eqnarray}
  After computing the derivative one finds
 \begin{eqnarray}
 \label{eq:ldpeq22}
  \frac{d\phi_3(c_3)}{dc_3} = c_3 + \frac{\sqrt{2p}}{\sqrt{\pi}}\frac{e^{-\frac{c_3^2p }{2}}}{ \erfc  \lp -\frac{c_3\sqrt{p}}{\sqrt{2}} \rp }  - u =0 .
  \end{eqnarray}
Taking $\hat{c}_3$ such that $  \frac{d\phi_3(c_3)}{dc_3} =0$, one from (\ref{eq:ldpeq22}) obtains
 \begin{eqnarray}
 \label{eq:ldpeq23}
  \frac{1}{ \erfc  \lp -\frac{\hat{c}_3\sqrt{p}}{\sqrt{2}} \rp }  = e^{\frac{\hat{c}_3^2p}{2}}( u - \hat{c}_3 )\sqrt{\frac{\pi}{2p}}.
  \end{eqnarray}
 A combination of (\ref{eq:ldpeq21}) and  (\ref{eq:ldpeq23}) further gives
 \begin{equation}
 \label{eq:ldpeq24}
  \phi_3(\hat{c}_3)
  =
  \frac{\hat{c}_{3}^2}{2}  -   \log  \lp  \frac{1}{\erfc  \lp -\frac{\hat{c}_3\sqrt{p}}{\sqrt{2}} \rp } \rp
- \hat{c}_3 u
   =
 - \frac{\hat{c}_{3}^2}{2}(p-1)  -   \log  \lp u-\hat{c}_3 \rp
- \hat{c}_3 u -\frac{1}{2}\log\lp \frac{\pi}{2p}\rp
 = \phi_{\mB^n}(p,u),
  \end{equation}
  which completes the proof.
\end{proof}

Proceeding as after Theorem \ref{thm:thm3}, we set
 \begin{eqnarray}
 \label{eq:ldpeq25}
u_{GS}^{(sk)}   \triangleq
\min & & u \nonumber \\
 \mbox{such that} & &  \phi_{\mB^n}(p,u) <0,
  \end{eqnarray}
or alternatively
 \begin{eqnarray}
 \label{eq:ldpeq25}
u_{GS}^{(sk)}   \triangleq
\min_{c_3>0} \lp  \frac{c_{3}}{2}  +  \frac{1}{c_3} \log  \lp  \erfc  \lp -\frac{c_3\sqrt{p}}{\sqrt{2}} \rp \rp
 \rp.
  \end{eqnarray}
This then
gives for the GSE of the Ising $p$-spin model
 \begin{eqnarray}
 \label{eq:ldpeq26}
\xi^0_{sk}(p) \leq u_{GS}^{(sk)},
   \end{eqnarray}
   and
   for the GSE of the Ising multipartite $p$-spin model
 \begin{eqnarray}
 \label{eq:ldpeq27}
\xi_{sk}(p) \leq  \sqrt{p}u_{GS}^{(sk)}.
   \end{eqnarray}
Provided that (\ref{eq:ldpeq26}) holds with equality, one has equality in (\ref{eq:ldpeq27}) as well (numerical optimization over Parisi functional suggests that his might be the case at least for some $p$).

\vspace{.2in}

\noindent \underline{\emph{\textbf{Concrete results based on Theorem \ref{thm:thm4}}}}

Evaluating the above given GSE (bounds) one obtains results shown in Figure \ref{fig:fig2}.  Concrete values obtained for several smallest $p$ are also shown in Table \ref{tab:tab2} (analogously to the spherical case, they are denoted by $\xi^{(2,p)}_{sk}(p) $). The corresponding replica symmetry predictions  (denoted by $\xi^{(1)}_{sk}(p)$ and obtained by the above mechanism for $c_3\rightarrow 0$) are shown as well. As in the above discussed spherical scenario, these values also correspond to the first and second partial level of lifting within the fl RDT \cite{Stojnicflrdt23,Stojnicnflgscompyx23}.

\begin{figure}[h]
\centering
\centerline{\includegraphics[width=1.00\linewidth]{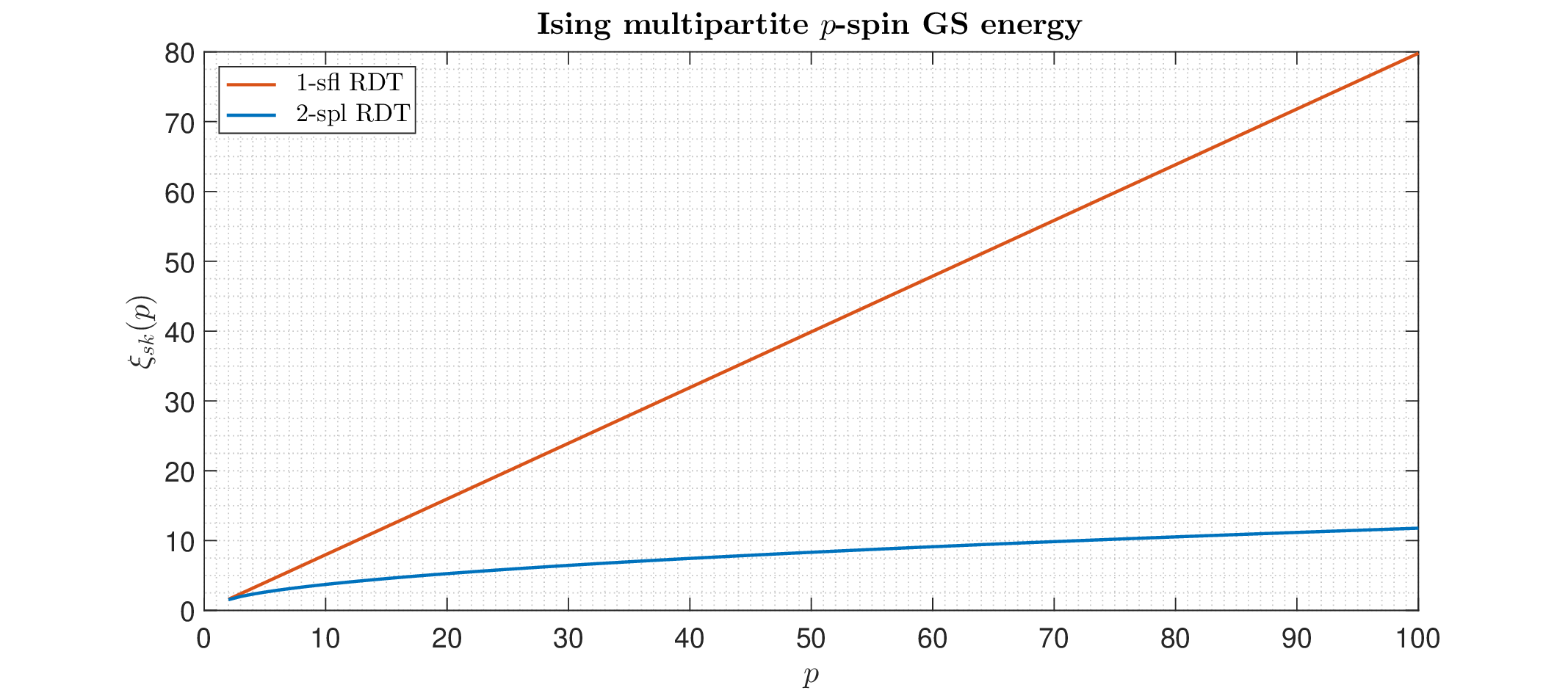}}
\caption{$\xi_{sk}(p)$ (bound) as a function of $p$}
\label{fig:fig2}
\end{figure}

\begin{table}[h]
\caption{Ising multipartite $p$-spin  GSE  -- concrete  values (bounds)  for different $p$ }\vspace{.1in}
\centering
\def\arraystretch{1.2}
\begin{tabular}{||l||c||c||c||c||c||c|| }\hline\hline
 \hspace{-0in}$p$                                             & $2$    & $3$ & $4$ &  $5$ & $6$ &  $7$   \\ \hline\hline
$\xi_{sk}^{(1)} (p )$                                         &  \bl{$\mathbf{1.5958}$} &  \bl{$\mathbf{  2.3937}$} &  \bl{$\mathbf{3.1915}$} & \bl{$\mathbf{3.9894}$}  &  \bl{$\mathbf{4.7873}$} &  \bl{$\mathbf{5.5852}$} \\ \hline\hline
$\xi_{sk}^{(2,p)} (p )$                                         & \bl{$\mathbf{1.5377}$}  & \bl{$\mathbf{1.9927}$}  & \bl{$\mathbf{ 2.3348}$} &
\bl{$\mathbf{ 2.6235}$}  & \bl{$\mathbf{2.8796}$} & \bl{$\mathbf{3.1130}$}  \\ \hline\hline
\end{tabular}
\label{tab:tab2}
\end{table}

\subsection{Discussion}
\label{sec:disc}

Studying general multipartite $p$-spin systems can be done via fully lifted (fl) RDT \cite{Stojnicflrdt23,Stojnicnflgscompyx23}.   However, fl RDT to a large degree relies on heavy numerical evaluations even when $p=2$. The numerical burden exponentially increases with $p$ making it fairly unpractical already for $p\sim 5-10$ when quick assessments are needed. Things do simplify a bit when $\cS^{(j)}=S,1\leq j\leq p$, but the need for quicker more practical characterization remains. The bounds that we presented above are particularly useful in such contexts as they are substantially simpler. To have them fully operational, an assessment of their accuracy is welcome as well. The fact that there are scenarios where they  provide the exact characterizations is both convenient and intriguing. Logical continuation may ask to what extent one can actually rely on such exactness. To formulate possible directions that might lead to addressing this question, we conveniently state the following corollary (an immediate consequence of Corollary \ref{cor:cor1}).

\begin{corollary}
For even $p>2$  let $pMP(\bcS)$ denote the multipartite $p$-spin models where $j$-th $n$-dimensional spin vector, $\x^{(j)}$, belongs to set $\cS^{(j)}\subseteq\mS^n$ and let $pSP(\cS)$ denote the corresponding single-partite $p$-spin models where single $n$-dimensional spin vector, $\x$,  belongs to set $\cS\subseteq\mS^n$. If
   \begin{eqnarray}
   \label{eq:cor3eq1}
   & & \hspace{-.25in} \star \quad  \cS^{(j)}=\cS, \quad (\mbox{i.e., the structures of the sets to which the spins from different groups belong are identical})
\nonumber \\
   & & \hspace{-.25in}  \star \quad  GSE(pSP(\cS)) \quad \mbox{is achieved on the second partial level of lifting},\nonumber
   \end{eqnarray}
   then
   \begin{eqnarray}
   \label{eq:cor3eq2}
    GSE(pSP(\bcS))= \sqrt{p} GSE(pSP(\cS)) .
   \end{eqnarray}
   \label{cor:cor3}
\end{corollary}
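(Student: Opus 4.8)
The plan is to read the claim off Corollary~\ref{cor:cor1} once its bound chain is interpreted as a statement about single-partite quantities. Write $\xi(p;\cS)\triangleq\xi(p;\bcS)$ for the multipartite ground state energy $GSE(pMP(\bcS))$ (legitimate since $\cS^{(j)}=\cS$ for all $j$) and $\xi^0(p;\cS)$ for the single-partite pure $p$-spin ground state energy $GSE(pSP(\cS))$. Because all spin sets coincide, the second part of Corollary~\ref{cor:cor1} applies and (\ref{eq:cor1eq6}) gives
\[
\lim_{n\rightarrow\infty}\xi_l^0(p;\cS,n)\ \leq\ \xi(p;\cS)\ \leq\ \min_{c_3>0}\lim_{n\rightarrow\infty}\xi_u^0(p;\cS,n,c_3),
\]
with $\xi_l^0,\xi_u^0$ as in (\ref{eq:cor1eq5}). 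The whole task is then to show that both the left and the right end of this chain equal $\sqrt p\,\xi^0(p;\cS)$.

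First I would settle the left end. By its definition in (\ref{eq:cor1eq5}), $\xi_l^0(p;\cS,n)=\tfrac{\sqrt p}{\sqrt n}\,\mE_{A}\max_{\x\in\cS}\sum_{i_1,i_2,\dots,i_p=1}^n A_{i_1,i_2,\dots,i_p}\prod_{k=1}^p\x_{i_k}$ is exactly $\sqrt p$ times the finite-$n$ single-partite functional whose $n\to\infty$ limit defines $\xi^0(p;\cS)$, so $\lim_{n}\xi_l^0(p;\cS,n)=\sqrt p\,\xi^0(p;\cS)$. Hence $\xi(p;\cS)\geq\sqrt p\,\xi^0(p;\cS)$, and this half of the claim needs no hypothesis beyond $p\in 2\mN$.

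Next I would treat the right end, which is where the assumption that $GSE(pSP(\cS))$ is achieved on the second partial level of lifting enters. After the cosmetic rescaling $c_3\mapsto c_3\sqrt p$ (the same one already used to go from (\ref{eq:cor1eq4}) to (\ref{eq:cor1eq6})) and the thermodynamic-limit evaluation of $\tfrac1n\log\mE_{\g}e^{(\cdot)}$ --- carried out explicitly inside the proofs of Theorems~\ref{thm:thm3} and \ref{thm:thm4} --- the quantity $\min_{c_3>0}\lim_n\xi_u^0(p;\cS,n,c_3)$ is precisely $\sqrt p$ times the upper bound for $\xi^0(p;\cS)$ produced by the Chernoff/Gaussian-comparison mechanism of Corollary~\ref{cor:cor2}; the $c_3\rightarrow0$ specialization of that bound is the first partial level of lifting and its optimization over $c_3>0$ is the second. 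This is exactly the bookkeeping underlying the spherical identity $\xi_{sph}(p)=\sqrt p\,u^{(sph)}_{GS}$ in (\ref{eq:ldpeq19}) and the Ising inequality $\xi_{sk}(p)\leq\sqrt p\,u^{(sk)}_{GS}$ in (\ref{eq:ldpeq27}). Under the assumption, the second-level upper bound is tight, i.e., it coincides with $\xi^0(p;\cS)$, whence $\min_{c_3>0}\lim_n\xi_u^0(p;\cS,n,c_3)=\sqrt p\,\xi^0(p;\cS)$. Squeezing $\xi(p;\cS)$ between the two ends then yields $\xi(p;\cS)=\sqrt p\,\xi^0(p;\cS)$, which is (\ref{eq:cor3eq2}).

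The only step that is not mere bookkeeping is the identification in the preceding paragraph: one must unwind the definition of the partial-lifting hierarchy of \cite{Stojnicflrdt23,Stojnicnflgscompyx23} and verify that its second object is, term for term and scaling for scaling, the functional $\min_{c_3>0}\lim_n\xi_u^0(p;\cS,n,c_3)/\sqrt p$. Granting that, the phrase \emph{achieved on the second partial level of lifting} is literally the equality $\min_{c_3>0}\lim_n\xi_u^0(p;\cS,n,c_3)=\sqrt p\,\xi^0(p;\cS)$, and nothing further is needed. The remaining technical points --- interchanging $\lim_n$ with $\lim_{c_3\to0}$, nonnegativity of the relevant functionals, and existence of the various limits --- are already secured inside the proof of Corollary~\ref{cor:cor1} (cf.\ (\ref{eq:mr18})) and in the large-deviations computations of Theorems~\ref{thm:thm3}--\ref{thm:thm4}.
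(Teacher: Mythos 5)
Your proposal is correct and is essentially the paper's own argument: the paper proves Corollary~\ref{cor:cor3} by simply invoking the ``sandwiching'' provided by Corollary~\ref{cor:cor1}, which is exactly the squeeze you spell out (lower end $\lim_n\xi_l^0=\sqrt{p}\,\xi^0(p;\cS)$, upper end $\min_{c_3>0}\lim_n\xi_u^0$ identified with $\sqrt{p}$ times the second-partial-lifting bound, made tight by the hypothesis). Your version is just a more explicit write-up of the same route, including the reading of ``achieved on the second partial level of lifting'' that the paper itself uses after Theorems~\ref{thm:thm3} and~\ref{thm:thm4}.
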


\begin{proof}
Automatically follows from  Corollary \ref{cor:cor1} as the needed ``sandwiching'' bounding mechanism is provided there.
\end{proof}

\begin{remark}
The $p$-evenness is likely not needed in the above corollary (and also anywhere else in the paper). Examining Talagrand's positivity principle \cite{Talpos00} and combining the  mechanisms introduced here together  with Panchenko's considerations from \cite{Pan10,Pan10a,Pan13,Pan13a,Pan13b} provides viable avenues towards removing this restriction.
\end{remark}

As we have seen, the condition of the above corollary is satisfied for the spherical sets and a strong numerical evidence suggests that it might also be true for the Ising sets. En route to settling compressed sensing large deviations,  \cite{Stojnicl1RegPosasym} obtained that the LDP considerations similar to the ones presented above hold for more general convex sets. This imminently suggests the following

\vspace{-.0in}\begin{center}
 \begin{tcolorbox}[title= Two interesting questions: ]
\vspace{-.15in}
 \begin{eqnarray*}
& &  \hspace{-.2in} \mbox{ 1) \emph{What are the general characteristics of sets $\cS$ for which the conditions of Corollary \ref{cor:cor3} hold?  }}
\\
& & \hspace{-.2in} \mbox{ 2) \emph{What is the ovperall role of convexity with respect to the conditions of Corollary \ref{cor:cor3}? }}
  \end{eqnarray*}
\vspace{-.2in}
 \end{tcolorbox}
\end{center}\vspace{-.0in}

\noindent Keeping in mind that Ising set $\mB^n$ is actually obtained as the intersection of the convex set $\{\x|\x_i^2\leq\frac{1}{n}\}$  and the unit sphere $\mS^n$, a very interesting next step might be to consider the intersection of $\{\x|\x_i^2\leq\frac{c}{n},c>1\}$ and $\mS^n$.

\begin{singlespace}
\bibliographystyle{plain}
\bibliography{nflgscompyxRefs}
\end{singlespace}

\end{document}